\titleformat*{\section}{\large\bfseries}
\newtheorem{Theorem}{Theorem}[section]
\newtheorem*{Theorem*}{Theorem}
\newtheorem{Lemma}[Theorem]{Lemma}
\theoremstyle{definition}
\newtheorem{Remark}[Theorem]{\it Remark}
\newtheorem{Algorithm}[Theorem]{Algorithm}
\newtheorem*{Algorithm*}{Algorithm}
\newtheorem{Corollary}[Theorem]{Corollary}
\newtheorem{Definition}[Theorem]{Definition}
\newtheorem{Example}[Theorem]{Example}
\newtheorem*{Proof}{Proof}
\renewcommand\@biblabel[1]{#1.}
\newcommand{\R}{\mathbb{R}}
\DeclareMathOperator{\vi}{\rm{VI}}
\DeclareMathOperator{\pr}{\rm{Pr}}
\newcolumntype{C}[1]{>{\centering\let\newline\\\arraybackslash\hspace{0pt}}m{#1}}
\begin{document}	
	\title{\bf \Large Variational Inequalities \\ Governed By Strongly Pseudomonotone Operators}
	
	\author{Pham Tien Kha\footnote{Department of Mathematics, Ho Chi Minh City University of Education, 280 An Duong Vuong, Ho Chi Minh, Vietnam. E-mails: phamtienkha@gmail.com; khapt@hcmue.edu.vn}, \qquad
		Pham  Duy Khanh\footnote{Department of Mathematics, HCMC University of Education, Ho Chi Minh, Vietnam and Center for Mathematical
			Modeling, Universidad de Chile, Santiago, Chile. E-mails:
			pdkhanh182@gmail.com; pdkhanh@dim.uchile.cl}
	}		
	
	\maketitle
	\date{}
	
	\begin{quote}
		\noindent {\bf Abstract.} Qualitative and quantitative aspects for variational inequalities governed by strongly pseudomonotone operators on Hilbert space are investigated in this paper. First, we establish a global error bound for the solution set of the given problem with the residual function being the normal map. Second, we will prove that the iterative sequences generated by gradient projection method (GPM) with stepsizes forming a non-summable diminishing sequence of positive real numbers converge to the unique solution of the problem when the operator is bounded over the constraint set. Two counter-examples are given to show the necessity of the boundedness assumption and the variation of stepsizes. We also analyze the convergence rate of the iterative sequences generated by this method. Finally, we give an in-depth comparison between our algorithm and a recent related algorithm through several numerical experiments.
		
		\medskip
		\noindent {\bf Keywords:}\ Variational inequalities $\cdot$ Strong pseudomonotonicity $\cdot$ Strong monotonicity $\cdot$ Gradient projection method  $\cdot$ Variable stepsizes $\cdot$ Error bound $\cdot$ Convergence $\cdot$ Convergence rate
		
		\medskip
		\noindent {\bf Mathematics Subject Classification (2010):} 47J20 $\cdot$ 49J40 $\cdot$ 49M30
	\end{quote}
	
	\medskip
	
	\section{Introduction}
	
	Variational inequality (VI) is a powerful mathematical model which unifies the study of important concepts such as optimization problems, equilibrium problems, complementarity problems, obstacle problems and continuum problems in the mathematical sciences (see e.g. \cite{pang, stam}).
	
	Qualitative properties of VI strongly depend on some kind of monotonicity. In particular, the existence and uniqueness of the solution to the VI can be established under strong monotonicity. In view of the natural residue of the projection, Facchinei and Pang \cite{pang} obtained an upper error bound for strongly monotone and Lipschitz continuous VI. In \cite{khanh-minh}, Khanh and Minh introduced a sharper error bound for this class of VI and gave a counter-example to show the necessity of Lipschitz continuity. Moreover, an extended result for strongly pseudomonotone and Lipschitz continuous VI was established in \cite{kim-vuong-khanh}. Such error bound not only plays an important role in proving the convergence of algorithms but also serves as a termination criteria for iterative algorithms. A question arises: can we find an error bound that does not require the Lipschitz continuity assumption? In this paper, by using the normal map which is closely related to the natural map as the residual function, we present a new error bound for strongly pseudomonotone VIs. 
	
	There are several algorithms solving VI with certain monotonicity and continuity assumptions. Among those methods, the GPM \cite[Algorithm 12.1.1]{pang} which solves strongly monotone and Lipschitz continuous VI is one of the cheapest. A modified GPM with variable stepsizes solving strongly pseudomonotone and Lipschitz continuous VI has recently been established in \cite{khanh-vuong}. They also proposed a GPM with non-summable diminishing stepsize sequence in which we do not need to know a priori constants. Following this idea, we will prove in this paper that the Lipschitz continuity can be completely omitted in the modified GPM, however the boundedness of the operator over the constraint set is required. A counter-example is given to show the necessity of this boundedness assumption. We also give a counter-example to show that the traditional GPM with constant stepsize cannot be applied when the Lipschitz continuity is omitted. When the stepsizes are sequences of terms defining the $p$-series, we can estimate the rate of convergence of modified GPM which depends on the interval containing $p$. 
	
	Following this introduction, we give some preliminaries in Section 2 in which we recall some well-known definitions and properties of the projection mapping, kinds of monotonicity as well as the natural map and the normal map. In Section 3, we establish the error bound for strongly pseudomonotone VIs. In Section 4, we recall the classical GPM and give a counter-example to show its unavailability when omitting Lipschitz continuity condition. A modification for this method is proposed for the given problem. Some convergence rate results are established in Section 5. Some numerical experiments and comparisions with related works are given in Section 6. Finally, concluding remarks are given in Section 7. 
	
	\section{Preliminaries}
	
	Consider a Hilbert space $H$ with scalar product $\langle \cdot\ , \cdot \rangle$. Let $K\subset H$ be a non-empty closed convex set and $F\colon K\to H$ be an operator. The variational inequality problem defined by $K$ and $F$, denoted by $\vi(K, F)$, is to find $x^\ast\in K$ such that 
	\begin{equation}\label{VI}
	\langle F(x^\ast), x - x^\ast \rangle \ge 0, \quad \forall x\in K.
	\end{equation}
	Clearly, if $x^\ast$ satisfies \eqref{VI} and belongs to the interior of $K$ then $F(x^\ast) = 0$.
	
	For each $x\in H$, there exists a unique point in $K$ \cite[Chapter 1, Lemma 2.1]{stam}, denoted by $\pr_K(x)$, such that $$\|x - \pr_K(x)\| \le \|x - y\|, \quad \forall y\in K. $$
	
	The point $\pr_K(x)$ is called the projection of $x$ on $K$. Some well-known properties of the projection mapping $\pr_K\colon H\to K$ are recalled in the following theorem (see \cite[Chapter 2]{ekeland} and \cite[Chapter 1, Theorem 2.3]{stam}).
	
	\begin{Theorem}\label{projection_char}
		Let $K\subset H$ be a non-empty closed convex set.
		\begin{enumerate}
			\item [{\rm(a)}] For all $x\in H$ and $y\in K$, it holds that $$\langle x - \pr_K(x), y - \pr_K(x)\rangle \le 0.$$
			
			\item [{\rm (b)}] The projection mapping is non-expansive, that is $$\|\pr_K(x) - \pr_K(y) \| \le \|x - y\|, \quad \forall x, y \in H.$$
		\end{enumerate}
	\end{Theorem}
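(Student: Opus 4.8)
The plan is to derive both (a) and (b) from the defining minimality property of $\pr_K(x)$, which says that $\|x - \pr_K(x)\|^2 \le \|x - y\|^2$ for every $y \in K$. For part (a), I would fix $x \in \R^n$ and $y \in K$, write $z := \pr_K(x)$, and exploit convexity of $K$: for every $t \in (0,1]$ the point $z + t(y - z) = (1-t)z + ty$ lies in $K$, so the minimality of $z$ gives $\|x - z\|^2 \le \|x - z - t(y-z)\|^2$. Expanding the right-hand side yields $\|x-z\|^2 \le \|x-z\|^2 - 2t\langle x - z, y - z\rangle + t^2\|y-z\|^2$, hence $2t\langle x - z, y - z\rangle \le t^2 \|y - z\|^2$ for all $t \in (0,1]$. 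Dividing by $t > 0$ and letting $t \downarrow 0$ gives $\langle x - z, y - z\rangle \le 0$, which is exactly the asserted inequality.

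For part (b), I would apply the inequality from (a) twice. Fix $x, y \in \R^n$ and set $u := \pr_K(x)$, $v := \pr_K(y)$. Since $v \in K$, part (a) applied at $x$ gives $\langle x - u, v - u \rangle \le 0$; since $u \in K$, part (a) applied at $y$ gives $\langle y - v, u - v \rangle \le 0$. Adding these two inequalities (after rewriting the second as $\langle v - y, v - u \rangle \le 0$) produces $\langle (x - u) - (y - v),\, v - u \rangle \le 0$, i.e. $\langle (x - y) - (u - v),\, v - u \rangle \le 0$, which rearranges to $\|u - v\|^2 \le \langle x - y, u - v \rangle$. Applying the Cauchy--Schwarz inequality to the right-hand side gives $\|u - v\|^2 \le \|x - y\|\,\|u - v\|$, and dividing by $\|u - v\|$ (the case $u = v$ being trivial) yields $\|\pr_K(x) - \pr_K(y)\| \le \|x - y\|$.

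Neither part presents a genuine obstacle; the only point requiring a little care is the limiting argument in part (a), where one must use the convexity of $K$ to stay inside $K$ along the segment and then pass to the limit $t \downarrow 0$ rather than simply setting $t = 1$ (setting $t = 1$ would only recover the weaker estimate $\langle x - z, y - z\rangle \le \tfrac12\|y-z\|^2$). In part (b) the mild subtlety is the sign bookkeeping when adding the two instances of (a), and the separate (trivial) treatment of the degenerate case $\pr_K(x) = \pr_K(y)$ before dividing.
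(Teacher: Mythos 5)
Your proof is correct. The paper itself gives no proof of this theorem --- it is recalled as a well-known fact with a pointer to the standard references --- and your argument (the variational inequality characterization of the projection obtained by perturbing along the segment $(1-t)\pr_K(x)+ty$ and letting $t\downarrow 0$, followed by the two-fold application of (a) plus Cauchy--Schwarz for non-expansiveness) is exactly the classical proof found in those references, with all the delicate points (the limit $t\downarrow 0$ rather than $t=1$, and the degenerate case $\pr_K(x)=\pr_K(y)$) handled properly.
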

	
	One often considers $\vi(K, F)$ when $F$ possesses a certain monotonicity property. 
	
	\begin{Definition} (see \cite{karamardian} and \cite{kara-schai})
		Let $K\subset H$ be arbitrary. The mapping $F\colon K\to H$ is said to be
		\begin{enumerate}
			\item [(a)] monotone on $K$ if $$\langle F(x) -  F(y), x - y \rangle \ge 0, \quad \forall x, y\in K.$$
			
			\item [(b)] strongly monotone on $K$ if there exists $\gamma > 0$ such that $$\langle F(x) -  F(y), x - y \rangle \ge \gamma \|x - y \|^2, \quad \forall x, y\in K.$$
			
			\item [(c)] pseudomonotone on $K$ if $$\langle F(y), x - y \rangle \ge 0 \implies \langle F(x), x - y \rangle \ge 0, \quad \forall x, y\in K.$$
			
			\item [(d)] strongly pseudomonotone on $K$ if there exists $\gamma > 0$ such that $$\langle F(y), x - y \rangle \ge 0 \implies \langle F(x), x - y \rangle \ge \gamma \|x - y \|^2, \quad \forall x, y\in K.$$
		\end{enumerate}
	\end{Definition}
	Obviously, the following relations hold: ${\rm(b)}\implies{\rm(a)} \implies{\rm(c)}$ and ${\rm(b)}\implies{\rm(d)}\implies{\rm(c)}$. The reversed implications are not true in general.
	
	\begin{Remark}\text{}
		\begin{enumerate}
			\item If $F$ is strongly monotone or strongly pseudomononotone on $K$, $\vi(K, F)$ has at most one solution.
			\item When $F$ is continuous on finite dimensional subspaces of $H$ and strongly pseudomonotone on $K$, $\vi(K, F)$ has a unique solution \cite[Theorem 2.1]{kim-vuong-khanh} (the mapping $F$ from $K$ to $H$ is {\it continuous on finite dimensional subspaces of $H$} if for any finite dimensional subspace $M\subset H$, the restriction of $F$ to $K\cap M$ is weakly continuous; see \cite[Chapter 3, Definition 1.2]{stam}).
		\end{enumerate}
	\end{Remark}
	
	\medskip
	\medskip
	
	We recall the Lipschitz continuity of a mapping.
	
	\begin{Definition}
		Let $K\subset H$ be arbitrary. A mapping $F\colon K\to H$ is said to be Lipschitz continuous on $K$ if there exists $L > 0$ such that $$\|F(x) - F(y)\| \le L\|x - y \|, \quad \forall x, y\in K.$$
	\end{Definition}
	
	\medskip
	
	Now we consider two well-known mappings associated with the problem VI($K,F$): the natural map $F_K^\text{nat}$ and the normal map $F_K^\text{nor}$. 
	
	\begin{Definition}
		Let $K\subset H$ be a non-empty closed convex set and $F\colon K\to H$ be arbitrary.
		\begin{enumerate}
			\item [(a)] The natural map $F_K^\text{nat}\colon K\to H$ is defined as $$F_K^\text{nat}(x) := x - \pr_K(x-F(x)), \quad \forall x\in K.$$
			
			\item [(b)] The normal map $F_K^\text{nor}\colon H\to H$ is defined as $$F_K^\text{nor}(x) := F(\pr_K(x))+x-\pr_K(x), \quad \forall x\in H.$$
		\end{enumerate}
	\end{Definition}
	The mappings $F_K^\text{nat}$ and $F_K^\text{nor}$ are  very useful for characterizing the solution set of $\vi(K, F)$ \cite[Propositions 1.5.8 and 1.5.9]{pang}. The results in \cite[Propositions 1.5.8 and 1.5.9]{pang} are proven in finite dimensional space and one can follow the same pattern to prove the following generalized version in Hilbert space. 
	
	\begin{Theorem}\label{F_K^nat}
		Let $K\subset H$ be non-empty closed convex set and $F\colon K\to H$ be arbitrary.
		
		\begin{enumerate}
			\item [\rm (a)] $x^\ast$ is a solution of $\vi(K, F)$ if and only if $F_K^\text{nat}(x^\ast) = 0$.
			
			\item [\rm (b)] $x^\ast$ is a solution of $\vi(K, F)$ if and only if there exists $z\in H$ such that $x^\ast = \pr_K(z)$ and $F_K^\text{nor}(z) = 0$.
		\end{enumerate}
	\end{Theorem}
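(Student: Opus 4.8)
The plan is to prove both equivalences by unwinding the definitions of the natural and normal maps and invoking the variational characterization of the projection from Theorem~\ref{projection_char}(a). For part (a), I would start from the observation that $F_K^\text{nat}(x^\ast)=0$ is, by definition, the same as $x^\ast=\pr_K(x^\ast-F(x^\ast))$; in particular this already forces $x^\ast\in K$. Writing $z:=x^\ast-F(x^\ast)$, Theorem~\ref{projection_char}(a) applied with this $z$ gives $\langle z-x^\ast,\,y-x^\ast\rangle\le 0$ for every $y\in K$, which upon substituting $z-x^\ast=-F(x^\ast)$ becomes exactly the VI inequality $\langle F(x^\ast),\,y-x^\ast\rangle\ge0$ for all $y\in K$. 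This settles the ``only if'' direction. For the ``if'' direction I need the converse implication: if $x^\ast\in K$ and $\langle z-x^\ast,\,y-x^\ast\rangle\le0$ for all $y\in K$, then $x^\ast=\pr_K(z)$. That follows from the elementary expansion $\|z-y\|^2=\|z-x^\ast\|^2+2\langle z-x^\ast,\,x^\ast-y\rangle+\|x^\ast-y\|^2\ge\|z-x^\ast\|^2$ together with the uniqueness of the projection; applying it with $z=x^\ast-F(x^\ast)$ turns the VI inequality back into $x^\ast=\pr_K(x^\ast-F(x^\ast))$, i.e.\ $F_K^\text{nat}(x^\ast)=0$.

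For part (b), I would reduce everything to part (a) via the change of variable $z=x^\ast-F(x^\ast)$. Assume first that $x^\ast$ solves $\vi(K,F)$; then by part (a), $x^\ast=\pr_K(x^\ast-F(x^\ast))=\pr_K(z)$ for this choice of $z$, and a direct computation gives $F_K^\text{nor}(z)=F(\pr_K(z))+z-\pr_K(z)=F(x^\ast)+(x^\ast-F(x^\ast))-x^\ast=0$, so the required $z$ exists. Conversely, suppose some $z\in\R^n$ satisfies $x^\ast=\pr_K(z)$ and $F_K^\text{nor}(z)=0$. Expanding the second equation, $F(\pr_K(z))+z-\pr_K(z)=0$ becomes $F(x^\ast)+z-x^\ast=0$, i.e.\ $z=x^\ast-F(x^\ast)$; hence $x^\ast=\pr_K(z)=\pr_K(x^\ast-F(x^\ast))$, which is $F_K^\text{nat}(x^\ast)=0$, and part (a) then yields that $x^\ast$ solves $\vi(K,F)$.

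There is no genuine obstacle here — the statement is essentially a bookkeeping exercise once the projection inequality is available in both directions. The one point needing a sentence of care is precisely that ``both directions'': Theorem~\ref{projection_char}(a) is stated as a one-way inequality, so I would either include the short converse argument sketched above or simply remark that this inequality is in fact a full characterization of $\pr_K$. Beyond that, the two halves of part (b) are immediate once the substitution $z=x^\ast-F(x^\ast)$ is recognized as the canonical certificate that makes the normal map vanish.
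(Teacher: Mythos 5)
Your argument is correct and complete: both implications of (a) are established (via the projection inequality in one direction and the elementary expansion of $\|z-y\|^2$ for its converse), and (b) follows cleanly from (a) with the substitution $z=x^\ast-F(x^\ast)$. The paper itself gives no proof of this theorem, citing \cite[Propositions 1.5.8 and 1.5.9]{pang} instead, and your argument is precisely the standard one found there; the only cosmetic slip is that your labels ``if'' and ``only if'' in part (a) are swapped relative to the statement, though both directions are in fact proved.
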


	\section{Error bound for strongly pseudomonotone VIs}
	
	With the help of degree theory, Facchinei and Pang proved that VI associated with strongly monotone and continuous operator admits a unique solution. The following error bound (which was originally proven in finite dimensional space, but we can use the same proof for Hilbert space) is widely used in that case \cite[Theorem 2.3.3]{pang}.
	
	\begin{Theorem}\label{old_errorbound}
		Let $K\subset H$ be a non-empty closed convex set, $F\colon K\to H$ be Lipschitz continuous with constant $L$ and strongly monotone with modulus $\gamma$, and $x^\ast$ be the unique solution of $\vi(K, F)$. For all $x\in K$, we have $$\| x - x^\ast\| \le \frac{L+1}{\gamma} \|x-\pr_K(x-F(x))\|.$$
	\end{Theorem}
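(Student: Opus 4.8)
The plan is to write $r := x-\pr_K(x-F(x)) = F_K^{\text{nat}}(x)$ and $y := \pr_K(x-F(x)) = x-r$, and to reduce everything to the single inequality $\gamma\|x-x^\ast\|^2 \le (L+1)\,\|r\|\,\|x-x^\ast\|$; dividing by $\|x-x^\ast\|$ then gives the claim, the case $x=x^\ast$ being trivial since then $r=0$ as well by Theorem \ref{F_K^nat}(a).

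First I would apply the variational characterization of the projection, Theorem \ref{projection_char}(a), to the point $x-F(x)$ with the admissible vector $x^\ast\in K$. Since $\pr_K(x-F(x))=y$, this yields $\langle (x-F(x))-y,\ x^\ast-y\rangle \le 0$, and substituting $y=x-r$ rewrites it as $\langle r-F(x),\ x^\ast-x+r\rangle \le 0$, which we read as an upper bound for $\langle F(x),\, x-x^\ast\rangle$. Next I would use that $x^\ast$ solves $\vi(K,F)$ tested against the feasible point $y=x-r\in K$, giving $\langle F(x^\ast),\, y-x^\ast\rangle \ge 0$, i.e. $\langle F(x^\ast),\, x-x^\ast\rangle \ge \langle F(x^\ast),\, r\rangle$.

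Then I would invoke strong monotonicity, $\gamma\|x-x^\ast\|^2 \le \langle F(x)-F(x^\ast),\, x-x^\ast\rangle$, and plug in the two bounds above. After cancellation of the terms in which $F(x)$ and $F(x^\ast)$ are paired with $x-x^\ast$, the right-hand side collapses to $\langle F(x)-F(x^\ast),\, r\rangle + \langle r,\, x-x^\ast\rangle - \|r\|^2$. Discarding $-\|r\|^2\le 0$, estimating $\langle F(x)-F(x^\ast),\, r\rangle \le \|F(x)-F(x^\ast)\|\,\|r\| \le L\|x-x^\ast\|\,\|r\|$ by the Cauchy--Schwarz inequality together with Lipschitz continuity, and $\langle r,\, x-x^\ast\rangle \le \|r\|\,\|x-x^\ast\|$, I obtain $\gamma\|x-x^\ast\|^2 \le (L+1)\,\|r\|\,\|x-x^\ast\|$, as desired.

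The proof is short, and there is no genuine obstacle; the only step needing care is the bookkeeping when the projection inequality and the variational inequality for $x^\ast$ are combined with the strong monotonicity estimate, so that precisely the terms pairing $F(x)-F(x^\ast)$ with $x-x^\ast$ cancel and what survives can be controlled by $\|r\|\,\|x-x^\ast\|$. Everything after that is Cauchy--Schwarz plus the Lipschitz hypothesis.
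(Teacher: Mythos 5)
Your proof is correct. The paper does not prove this theorem itself --- it is recalled from Facchinei--Pang \cite[Theorem 2.3.3]{pang} --- and your argument is precisely the standard one: combine the projection characterization at $x-F(x)$ tested with $x^\ast$, the variational inequality at $x^\ast$ tested with $\pr_K(x-F(x))\in K$, and strong monotonicity, then finish with Cauchy--Schwarz and the Lipschitz bound; the bookkeeping you describe does yield $\gamma\|x-x^\ast\|^2\le\langle F(x)-F(x^\ast),r\rangle+\langle r,x-x^\ast\rangle-\|r\|^2\le(L+1)\|r\|\,\|x-x^\ast\|$. This is also structurally the same scheme the paper uses for its new error bound (Theorem \ref{new_errorbound}), only with the natural map in place of the normal map.
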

	
	Extending \cite[Theorem 2.3.3]{pang}, Kim et al. proved in \cite[Theorem 2.1]{kim-vuong-khanh} the solution uniqueness for strongly pseudomonotone VI. Moreover, they established an error bound for strongly pseudomonotone and Lipschitz continuous VIs \cite[Theorem 4.2]{kim-vuong-khanh}. Recently, a sharper upper error bound and a new lower error bound for strongly monotone and Lipschitz continuous VIs were establised in \cite[Theorem 3.1]{khanh-minh}. The authors also showed in \cite{khanh-minh} that we cannot omit the Lipschitz continuity assumption in Theorem \ref{old_errorbound} \cite[Remark 3.1]{khanh-minh}. To deal with the non-Lipschitz case, we could establish a new error bound by using the normal map.
	
	\begin{Theorem}\label{new_errorbound}
		Let $K\subset H$ be non-empty closed convex and $F\colon K\to H$ be strongly pseudomonotone with modulus $\gamma$. Suppose that $\vi(K, F)$ admits a unique solution $x^\ast$. For all $x\in H$, we have 
		\begin{equation}\label{new_errorbound_ineq}
		\|x^\ast - \pr_K(x)\| \le \frac{1}{\gamma}\|F_K^\text{nor}(x)\|.
		\end{equation}
	\end{Theorem}
	\begin{Proof}
		For a given vector $x\in H$, write $r = F_K^\text{nor}(x)$. By Theorem \ref{projection_char}(a), for every $y\in K$,
		$$
		\langle x - \pr_K(x), \pr_K(x) - y \rangle \ge 0.
		$$
		Substituting $\pr_K(x) = F(\pr_K(x)) + x - r$ and $y = x^\ast$ into the above inequality, we obtain 
		$$
		\langle r - F(\pr_K(x)), \pr_K(x) - x^\ast \rangle \ge 0.
		$$
		This inequality is equivalent to
		\begin{equation}
		\label{err1} \langle r, \pr_K(x) - x^\ast\rangle \ge \langle F(\pr_K(x)), \pr_K(x) - x^\ast\rangle.
		\end{equation}
		Since $x^\ast$ is the solution of $\vi(K, F)$, we have 
		$$
		\langle F(x^\ast), \pr_K(x) - x^\ast \rangle \ge 0.
		$$
		By the strong pseudomonotonicity of $F$, the right-hand side of \eqref{err1} is not smaller than $\gamma \|x^\ast-\pr_K(x)\|^2$, while the left-hand side is not greater than $\|r\|\cdot \|x^\ast - \pr_K(x)\|$ by Cauchy-Schwarz inequality. Therefore,
		$$
		\|r\| \cdot \|x^\ast - \pr_K(x)\|\ge \gamma \|x^\ast - \pr_K(x)\|^2, 
		$$
		which deduces to \eqref{new_errorbound_ineq}. \qed
	\end{Proof}

	\begin{Remark}\label{remark_new_errorbound} If $x\in K$, $\pr_K(x) = x$. Thus $F_K^\text{nor}(x) = F(\pr_K(x))+x-\pr_K(x)=F(x)$. It follows from \eqref{new_errorbound_ineq} that
		
		\begin{equation}\label{new_errorbound_xinK}
		\|x^\ast - x\| \le \frac{1}{\gamma} \|F(x)\|, \quad \forall x\in K.
		\end{equation}
		
		It deduces from \eqref{new_errorbound_xinK} that for an arbitrary $x\in K$, $x^\ast$ is always in the closed ball with center $x$ and radius $\frac{1}{\gamma}\|F(x)\|$. In case $x^\ast$ lies in the interior of $K$, $F(x^\ast) = 0$. With an additional assumption that $F$ is continuous on $K$, \eqref{new_errorbound_xinK} can be used as a stopping criterion for methods solving strongly pseudomonotone VIs.
		
	\end{Remark}

	\section{Gradient projection method for strongly pseudomonotone VIs}
	
	We recall the classical gradient projection method solving $\vi(K, F)$ where $F$ is Lipschitz continuous with constant $L$ and strongly pseudomonotone with modulus $\gamma$. It is well-known that the iterative sequences generated by this method converge to the unique solution of the given problem (see \cite[Theorem 4.1]{khanh-vuong}). 
	
	\begin{Algorithm} {\it (Gradient projection algorithm with constant stepsize)}
		\label{old_algorithm}
		\begin{quote}
			{\bf Data.} Select $x_1\in K$ and $\lambda \in \left(0, \frac{2\gamma}{L^2} \right)$.
			
			{\bf Step 0:} Set $k = 1$.
			
			{\bf Step 1:} Compute $x_{k+1} = \pr_K(x_k - \lambda F(x_k))$.
			
			{\bf Step 2:} Check $x_{k+1} = x_k$. {\bf If} Yes {\bf then} Stop. {\bf Else} set $k = k+1$ and go to {\bf Step 1}.
		\end{quote}
	\end{Algorithm}
	
	\medskip
	The following example shows that the iterative sequence may not converge to the solution when the Lipschitz continuity of $F$ is omitted and $\lambda \in (0, 1)$.
	
	\begin{Example}\label{notLip}
		Let $K = [-1, 1]$ and $F\colon K \to \R$ be defined as 
		$$
		F(x) = \begin{cases} \;\;\;2\sqrt{x} \quad &\text{if}\quad \;\;\;\;0\le x\le 1 \\ -2\sqrt{-x} \quad &\text{if}
		\quad  -1\le x< 0\end{cases}.
		$$
		Since $F- \text{id}$ is an increasing function on $K$, $F$ is strongly monotone with modulus $1$ on $K$. On the other hand, $F$ is not Lipschitz continuous on $K$ since 
		$$
		\frac{F(x)-F(0)}{x-0} \to \infty \quad \text{as} \quad x\to 0^+.
		$$
		Moreover, $\vi(K, F)$ has a unique solution $x^\ast = 0$. Let $\lambda \in (0, 1), x_1 \in (0, \lambda^2)\subset K$ and $\{x_k\}_{k\ge 1}$ be the iterative sequence generated by  Algorithm \ref{old_algorithm}. 
		Observe that for an arbitrary $k$, if $0 < x_k < \lambda^2$ then $0 < x_k < x_{k+2} < \lambda^2$. 
		Indeed, since $0 < x_k < \lambda^2 < 1$, we have 
		$$x_k - \lambda F(x_k) = x_k - 2\lambda \sqrt{x_k} \in (-\lambda^2, 0) \subset (-1, 0)\subset K,$$
		thus
		$$x_{k+1} = \pr_K(x_k - \lambda F(x_k)) = x_k - 2\lambda \sqrt{x_k} \in (-\lambda^2, 0)\subset (-1,0).$$
		Next, we have 
		$$x_{k+1} - \lambda F(x_{k+1}) = x_{k+1} + 2\lambda \sqrt{-x_{k+1}} \in (0, \lambda^2) \subset (0, 1)\subset K ,$$
		then
		$$x_{k+2} = \pr_K(x_{k+1} - \lambda F(x_{k+1})) = x_{k+1} + 2\lambda \sqrt{-x_{k+1}} \in (0, \lambda^2).$$	
		It remains to show that $x_{k+2} > x_k$. We have $$\begin{aligned} x_{k+2} - x_k &=- 2\lambda \sqrt{x_k}+ 2\lambda \sqrt{2\lambda \sqrt{x_k} - x_k} \\& = 4\lambda \sqrt{x_k} \cdot \frac{\lambda - \sqrt{x_k}}{\sqrt{x_k}+\sqrt{2\lambda \sqrt{x_k} - x_k}} > 0, \end{aligned}$$ 
		which is true since $0 < x_k < \lambda^2$. 
		Following this observation, since $0 < x_1 < \lambda^2$, it can be proved by induction that $$0 < x_{2k+1}  <x_{2k+3} < \lambda^2, \quad \forall k\ge 0,$$
		which means $\{x_{2k+1}\}_{k\ge 0}$ is an increasing positive sequence. Thus $\{x_{2k+1}\}_{k\ge 0}$ is a subsequence of $\{x_k\}_{k\ge 1}$ that does not converge to $0$ which implies $\{x_k\}_{k\ge 1}$ does not converge to $0$. \end{Example}
	
	\medskip
	
	We now consider the case $F$ is merely strongly pseudomonotone and bounded on the constraint set $K$. Clearly, if $K$ is a bounded set, Lipschitz continuity leads to the boundedness of $F$ on $K$.
	
	\begin{Algorithm} {\it(Gradient projection algorithm with variable stepsizes)}\label{new_algorithm}
		\begin{quote}
			{\bf Data.} Select $x_1\in K$ and a positive sequence of stepsizes $\{\lambda_k\}$ satisfying $\displaystyle \sum_{k = 1}^\infty \lambda_k = \infty$ and $\displaystyle \lim_{k\to \infty} \lambda_k = 0$.
			
			{\bf Step 0:} Set $k=1$.
			
			{\bf Step 1:} Compute $x_{k+1} = \pr_K(x_k - \lambda_k F(x_k))$.
			
			{\bf Step 2:} Check $x_{k+1} = x_k$. {\bf If} Yes {\bf then} Stop. {\bf Else} set $k = k+1$ and go to {\bf Step 1}.
		\end{quote}
	\end{Algorithm}
	
	\medskip
	In comparison with Algorithm \ref{old_algorithm}, the stepsizes in Algorithm \ref{new_algorithm} are varied and forming a non-summable diminishing sequence of positive real numbers. In addition, the stepsizes in Algorithm ~\ref{new_algorithm} can be determined without knowing the modulus of strong pseudomonotonicity.
	
	\medskip
	If $\vi(K, F)$ is solvable, we will prove the iterative sequence in Algorithm \ref{new_algorithm} converges to the unique solution of $\vi(K, F)$. First, we need the following lemma which is a special case of \cite[Lemma 1.5]{xu09}.
	
	\begin{Lemma}\label{lemma}
		Let $\{\eta_k\}$ be a positive sequence satisfying $\displaystyle \sum_{k = 1}^\infty \eta_k = \infty$ and $\displaystyle \lim_{k\to \infty} \eta_k = 0$, $\{\delta_k\}$ be a real sequence satisfying $\displaystyle \lim_{k\to \infty} \delta_k = 0$. Assume that $\{a_k\}$ is a non-negative sequence such that $$a_{k+1} \le (1-\eta_k)a_k+\eta_k \delta_k, \quad \forall k\ge 1.$$
		Then $\{a_k\}$ converges to $0$.
	\end{Lemma}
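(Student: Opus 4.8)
The plan is to show that $\limsup_{k\to\infty} a_k \le \varepsilon$ for every $\varepsilon > 0$; since $a_k \ge 0$, this forces $a_k \to 0$. Fix $\varepsilon > 0$. Because $\eta_k \to 0$ and $\delta_k \to 0$, I can choose an index $N$ such that for all $k \ge N$ one has simultaneously $\eta_k \in (0,1)$ (so that the factor $1-\eta_k$ is non-negative, which only fails for finitely many $k$) and $\delta_k \le \varepsilon$. For $k \ge N$ the hypothesis then gives $a_{k+1} \le (1-\eta_k) a_k + \eta_k \varepsilon$, i.e., setting $b_k := a_k - \varepsilon$,
\[
b_{k+1} \le (1-\eta_k)\, b_k, \qquad k \ge N.
\]

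Next I would iterate this one-step estimate from $N$ to $k$, which is legitimate because each factor $1-\eta_j$ is non-negative:
\[
b_{k+1} \le \Big(\prod_{j=N}^{k}(1-\eta_j)\Big) b_N, \qquad k \ge N.
\]
I then split into two cases. If $b_N \le 0$, then since every partial product $\prod_{j=N}^{k}(1-\eta_j)$ is non-negative, the right-hand side is $\le 0$, so $a_k \le \varepsilon$ for all $k > N$ (in fact $b_k \le 0$ propagates by induction). If $b_N > 0$, I use the elementary inequality $1 - t \le e^{-t}$ to get $\prod_{j=N}^{k}(1-\eta_j) \le \exp\big(-\sum_{j=N}^{k}\eta_j\big)$; since $\sum_{k}\eta_k = \infty$, the exponent tends to $-\infty$, hence the product tends to $0$ and therefore $\limsup_{k} b_{k+1} \le 0$. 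In either case $\limsup_{k} a_k \le \varepsilon$, and letting $\varepsilon \downarrow 0$ together with $a_k \ge 0$ yields $a_k \to 0$.

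The argument is essentially routine; the only points requiring a little care are the bookkeeping around the sign of $b_N = a_N - \varepsilon$ (equivalently, whether $a_k$ has already dropped below $\varepsilon$ at stage $N$), and making sure the non-negativity of the factors $1-\eta_j$ is genuinely available when iterating the inequality — which is precisely why one first passes to an index $N$ beyond which $\eta_k < 1$. The divergence $\sum \eta_k = \infty$ enters only through the estimate $\prod_{j=N}^{k}(1-\eta_j)\to 0$; in this special case the extra hypothesis $\eta_k \to 0$ plays no further role beyond guaranteeing $\eta_k < 1$ eventually, although it is an essential part of the general statement in \cite{xu09}.
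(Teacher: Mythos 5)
Your proof is correct. Note that the paper does not actually prove this lemma: it is quoted as a special case of Lemma 1.5 in the cited reference of He and Xu, and your $\varepsilon$-argument --- passing to $b_k=a_k-\varepsilon$, telescoping the one-step contraction, and killing the product $\prod(1-\eta_j)$ via $1-t\le e^{-t}$ and $\sum\eta_k=\infty$ --- is essentially the standard proof given there, with the sign bookkeeping for $b_N$ handled correctly.
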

	
	We are ready to prove the convergence of the iterative sequence in Algorithm \ref{new_algorithm}. 
	
	\begin{Theorem}\label{new-grad}
		Let $K\subset H$ be a non-empty closed convex set, $F: K\to H$ be a strongly pseudomonotone with modulus $\gamma$. Suppose that $\vi(K, F)$ is solvable and its unique solution is $x^\ast$. Then
		\begin{itemize}
			\item[(i)] every sequence $\{x_k\}$ produced by Algorithm \ref{new_algorithm} satisfies
			\begin{equation}
			\label{ineq:new_grad} \|x^\ast - x_{k+1}\|^2 \le (1-2\lambda_k \gamma) \|x^\ast - x_k\|^2+\lambda_k^2 \|F(x_k)\|^2.
			\end{equation}
			
			\item[(ii)] Suppose in addition that $F$ is bounded on $K$. Then $\{x_k\}$ converges in norm to $x^\ast$.
		\end{itemize}
	\end{Theorem} 
	
	\begin{Proof} (i) Since $\vi(K, F)$ is solvable and $F$ is strongly pseudomonotone, $\vi(K, F)$ admits a unique solution. Since $x^\ast$ is the solution of $\vi(K, F)$ and $x_k\in K$, we have $$\langle F(x^\ast), x_k - x^\ast \rangle \ge 0.$$
		This inequality and the strong pseudomonotonicity of $F$ imply that $$\langle F(x_k), x_k - x^\ast \rangle \ge \gamma \|x^\ast - x_k\|^2.$$
		Multiplying $\lambda_k$ to both sides, the latter inequality is equivalent to 
		\begin{equation}
		\label{grad2} \langle x^\ast, \lambda_k F(x_k) \rangle \le  \lambda_k\langle F(x_k), x_k \rangle - \lambda_k \gamma \|x^\ast - x_k\|^2.
		\end{equation}
		Since $x_{k+1} = \pr_K(x_k - \lambda_k F(x_k))$, it follows from Theorem~\ref{projection_char}(a) that $$\langle x^\ast - x_{k+1}, x_k - \lambda_k F(x_k) - x_{k+1} \rangle \le 0,$$
		which is equivalent to 
		\begin{equation}
		\label{grad1} \langle x^\ast, x_k - \lambda_k F(x_k) - x_{k+1} \rangle \le \langle x_{k+1}, x_k - \lambda_k F(x_k) - x_{k+1} \rangle.
		\end{equation}	
		Adding  \eqref{grad2} and \eqref{grad1}, we obtain $$\langle x^\ast, x_k - x_{k+1} \rangle \le  \langle x_{k+1}, x_k - \lambda_k F(x_k) - x_{k+1} \rangle + \lambda_k\langle F(x_k), x_k \rangle - \lambda_k \gamma \|x^\ast - x_k\|^2. $$
		This inequality can be written as
		\begin{equation}
		\label{grad3} 2\lambda_k \gamma \|x^\ast - x_k\|^2 \le 2\langle x_{k+1}- x^\ast, x_k - x_{k+1} \rangle + 2\lambda_k \langle F(x_k), x_k - x_{k+1} \rangle.
		\end{equation}
		Since
		$$2\langle x_{k+1}- x^\ast, x_k - x_{k+1} \rangle = \|x^\ast - x_k\|^2 - \|x^\ast - x_{k+1}\|^2 - \|x_k - x_{k+1}\|^2 \quad \text{and}$$
		$$
		2\lambda_k \langle F(x_k), x_k - x_{k+1}\rangle\leq \lambda_k^2 \|F(x_k)\|^2+\|x_k - x_{k+1}\|^2,
		$$
		it follows from \eqref{grad3} that 
		$$
		2\lambda_k \gamma \|x^\ast - x_k\|^2 \le  \|x^\ast - x_k\|^2 - \|x^\ast - x_{k+1}\|^2+\lambda_k^2 \|F(x_k)\|^2,
		$$
		which is equivalent to 
		$$
		\|x^\ast - x_{k+1}\|^2 \le (1-2\lambda_k \gamma) \|x^\ast - x_k\|^2+\lambda_k^2 \|F(x_k)\|^2.
		$$
		
		(ii) Let $a_k = \|x^\ast - x_k\|^2, \eta_k = 2 \lambda_k \gamma$ and $\delta_k = \frac{\|F(x_k)\|^2}{2\gamma} \lambda_k$. The sequence $\{\eta_k\}$ is a positive sequence satisfying $\displaystyle \sum_{k=1}^\infty \eta_k = \infty$ and $\displaystyle \lim_{k\to \infty} \eta_k = 0$. Since $F$ is bounded on $K$ and $\{x_k\}\subset K$, the sequence  $\{\|F(x_k)\|^2\}$ is bounded and so $\{\delta_k\}$ is a real sequence satisfying $\displaystyle \lim_{k\to \infty} \delta_k = 0$. Therefore, it follows from inequality \eqref{ineq:new_grad} and Lemma \ref{lemma} that $\|x^\ast - x_k\|^2 \to 0$, which implies $x_k \to x^\ast$. \qed
		
	\end{Proof}
	
	\begin{Remark}\label{remark_boundedness}
		The boundedness of $F$ over $K$ can be weakened by the following condition: there exists a point $x\in K$ such that $F$ is bounded on $K^\prime = K\cap \overline{B\left(x, \frac{1}{\gamma} \|F(x)\|\right)}$, where $\overline{B\left(x, \frac{1}{\gamma} \|F(x)\|\right)}$ is the closed ball with center $x$ and radius $\frac{1}{\gamma} \|F(x)\|$. If this condition is satisfied, by inequality \eqref{new_errorbound_xinK} in Remark \ref{remark_new_errorbound}, $K^\prime$ is a non-empty, closed, convex set containing $x^\ast$ and $F$ is strongly pseudomonotone, bounded on $K^\prime$. Moreover, $\vi(K, F)$ and $\vi(K^\prime, F)$ admits the same unique solution $x^\ast$. Thus, we can apply Algorithm \ref{new_algorithm} for $\vi(K^\prime, F)$ and the convergence of the iterative sequence to $x^\ast$ is ensured by Theorem \ref{new-grad}. The trade-off here is we have to project on $K^\prime$, which is more costly than projecting on the original $K$. 
	\end{Remark}
	
	When $H$ is finite dimensional, the boundedness of $F$ on $K$ can be replaced by the boundedness of $K$ and the continuity of $F$ on $K$. In that case, $\vi(K, F)$ is always solvable.
	
	\begin{Corollary}\label{col_finite_dim}
		Suppose that $H$ is finite dimensional. Let $K\subset H$ be a non-empty closed bounded convex set, $F\colon K\to H$ be a continuous and strongly pseudomonotone operator on $K$. Then every sequence $\{x_k\}$ produced by Algorithm \ref{new_algorithm} converges to the unique solution of $\vi(K, F)$. 
	\end{Corollary}
	
	\medskip	
	The next example shows that the boundedness of $F$ on $K$ in Theorem \ref{new-grad} and the boundedness of $K$ in Corollary \ref{col_finite_dim} cannot be omitted.
	
	\begin{Example}\label{ex:not-bounded-on-K}
		Let $H = K = \R, F(x) = 2^{|x|}x$ and $\lambda_k =\frac{1}{k}$ for all $k\ge 1$. The operator $F$ is strongly monotone with modulus $1$ (thus strongly pseudomonotone with modulus $1$) but not bounded on $K$ and $\vi(K, F)$ has a unique solution $x^\ast = 0$. Moreover, the sequence $\{\lambda_k\}$ satisfying $\displaystyle \sum_{k = 1}^\infty \lambda_k = \infty$ and $\displaystyle \lim_{k\to \infty} \lambda_k = 0$.
		The iterative sequence $\{x_k\}$ in Algorithm \ref{new_algorithm} is defined as $$x_{k+1} = x_k\left( 1-\frac{2^{|x_k|}}{k}\right), \quad \forall k\ge 1.$$
		Let $x_1 = 2$. We will prove by induction that $$|x_k| \ge 2k, \quad \forall k\ge 1.$$
		The inequality is true for $k = 1$. Assume that $|x_k|\ge 2k$, we have $$\begin{aligned} |x_{k+1}| &= |x_k| \left|1 - \frac{2^{|x_k|}}{k}\right| \\& = |x_k|\left(\frac{2^{|x_k|}}{k}-1\right) \\& \ge 2k\left(\frac{4^k}{k}-1\right)\\&\ge 2(k+1).\end{aligned}$$
		Hence $\{x_k\}$ is not bounded, which means $\{x_k\}$ does not converge to $x^\ast$.
	\end{Example}
	
	\section{Rate of convergence}
	
	In this section, we consider $\vi(K, F)$ when $K \subset H$ is a non-empty closed convex set, $F: K\to H$ is a strongly pseudomonotone operator with modulus $\gamma$ and bounded on $K$. Suppose that $\vi(K, F)$ is solvable. We will investigate the rate of convergence of Algorithm \ref{new_algorithm} when the stepsizes are sequences of terms defining the $p$-series, i.e, $$\lambda_k = \frac{1}{k^p}, \quad \text{where } p\in (0, 1].$$
	
	First, let us note that we can always scale the given operator $F$ by $\frac{1}{2\gamma}$ so that the resulting operator $F^\prime$ is strongly pseudomonotone with modulus $\gamma^\prime = \frac{1}{2}$ and $\vi(K, F^\prime)$ admits the same solution with $\vi(K, F)$. Thus, we only need to consider a strongly pseudomonotone operator with modulus $\frac{1}{2}$.
	
	Let $\{x_k\}$ be the iterative sequence generated by Algorithm \ref{new_algorithm}. Recall inequality \eqref{ineq:new_grad} (remind that we assumed $\gamma = \frac{1}{2}$):
	$$\|x^\ast - x_{k+1}\|^2\le \left(1 - \lambda_k \right)\|x^\ast - x_k\|^2 +   \lambda_k^2\|F(x_k)\|^2, \quad \forall k\ge 1.$$
	Since $F$ is bounded on $K$, there exists $M>0$ such that 
	$$
	\|F(x)\|\leq M, \quad \forall x\in K.
	$$
	Since $\{x_k\}\subset K$, it follows that 
	$$\|x^\ast - x_{k+1}\|^2\le \left(1 - \lambda_k \right)\|x^\ast - x_k\|^2 +   M^2\lambda_k^2, \quad \forall k\ge 1.$$
	For simplicity, denote $\|x^\ast - x_k\|^2 = a_k$. The above inequality becomes 
	
	\begin{equation}\label{rate_ineq}
	a_{k+1} \le (1 - \lambda_k) a_k + M^2 \lambda_k^2, \quad \forall k\ge 1.
	\end{equation}
	
	This inequality plays an important role in determining the rate of convergence of the algorithm. We will consider three cases: $p = 1, p\in \left(\frac{1}{2}, 1 \right)$ and $p\in \left(0, \frac{1}{2} \right]$. Let us remind that $\{a_k\}$ converges to $0$ with rate $O(b_k)$, where $\{b_k\}$ is a sequence known to converge to $0$, if there exists a constant $K > 0$ such that $$|a_k| \le K |b_k|, \quad \text{for sufficiently large } k$$
	(see \cite[Definition 1.18]{burden}).
	
	\subsection{The case $p = 1$}
	
	\begin{Theorem}\label{rate_p=1}
		Let $\{x_k\}$ be the sequence generated by Algorithm \ref{new_algorithm} with stepsizes $\lambda_k = \frac{1}{k}$ for all $k\ge 1$ and $x^\ast$ be the solution of $\vi(K, F)$. Then $\|x^\ast - x_k\|$ converges to $0$ with rate $O\left( \sqrt{\frac{\ln k}{k}}\right)$.
	\end{Theorem}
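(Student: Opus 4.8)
The plan is to argue directly from the scalar recursion \eqref{rate_ineq}, which for $\lambda_k = \frac1k$ reads $a_{k+1} \le \left(1 - \frac1k\right) a_k + \frac{M^2}{k^2}$ for all $k \ge 1$, where $a_k = \|x^\ast - x_k\|^2$ and $M$ bounds $\|F\|$ on the compact set $K$ (so $M$ exists since $F$ is continuous and $\{x_k\}\subset K$). The key observation is that the contraction factor is $1 - \frac1k = \frac{k-1}{k}$, so multiplying both sides by $k$ turns the recursion into $k\,a_{k+1} \le (k-1)\,a_k + \frac{M^2}{k}$.

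Next I would introduce the auxiliary sequence $b_k := (k-1)\,a_k$, so that $b_1 = 0$ and $b_{k+1} = k\,a_{k+1}$; the inequality above then becomes the telescoping estimate $b_{k+1} \le b_k + \frac{M^2}{k}$. Summing this from $1$ to $k$ and using $b_1 = 0$ gives $b_{k+1} \le M^2 \sum_{j=1}^{k} \frac1j$, and the elementary harmonic bound $\sum_{j=1}^k \frac1j \le 1 + \ln k$ yields $k\,a_{k+1} \le M^2(1 + \ln k)$, i.e. $a_{k+1} \le \frac{M^2(1 + \ln k)}{k}$ for every $k \ge 1$. Re-indexing, $a_k \le \frac{M^2\left(1 + \ln(k-1)\right)}{k-1}$ for $k \ge 2$, which is $O\!\left(\frac{\ln k}{k}\right)$. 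Taking square roots and recalling $a_k = \|x^\ast - x_k\|^2$ delivers the claimed rate $\|x^\ast - x_k\| = O\!\left(\sqrt{\tfrac{\ln k}{k}}\right)$.

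I do not expect any genuine obstacle here: the whole argument is a one-line telescoping trick once one spots the substitution $b_k = (k-1)a_k$, which is natural because of the $\frac{k-1}{k}$ structure of the coefficient. The only points needing a little care are checking the base case $k = 1$ (there $1 - \lambda_1 = 0$, so $a_2 \le M^2$, consistent with the displayed bound), invoking the standard estimate for the harmonic numbers, and the harmless shift of index when converting a bound on $a_{k+1}$ into one on $a_k$. It is also worth noting that the same scheme, applied with $\lambda_k = k^{-p}$, is exactly what will drive the analysis of the remaining cases $p \in \left(\tfrac12, 1\right)$ and $p \in \left(0, \tfrac12\right]$.
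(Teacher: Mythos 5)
Your proof is correct and is essentially the paper's argument in different clothing: the paper unrolls the recursion by induction, using $\prod_{j=2}^{k}\left(1-\frac{1}{j}\right)=\frac{1}{k}$ to arrive at $a_{k+1}\le \frac{a_2}{k}+\frac{M^2}{k}\sum_{j=2}^{k}\frac{1}{j}$ and then bounds the harmonic sum by $\ln k$, which is exactly what your substitution $b_k=(k-1)a_k$ accomplishes via telescoping. The only cosmetic difference is that your choice $b_1=0$ absorbs the $\frac{a_2}{k}$ term automatically, giving a marginally cleaner constant.
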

	
	\begin{Proof}
		
		From inequality \eqref{rate_ineq}, we have $$a_{k+1} \le \left(1 - \frac{1}{k} \right) a_k + M^2 \cdot \frac{1}{k^2}, \quad \forall k\ge 1.$$
		By induction, we obtain
		$$a_{k+1} \le \frac{a_2}{k} + \frac{M^2}{k}\left( \frac{1}{2} + \frac{1}{3} + \cdots + \frac{1}{k}\right), \quad \forall k\ge 2.$$
		By the well-known inequality 
		$$
		\frac{1}{2} + \frac{1}{3} +\cdots + \frac{1}{k} < \ln k, \quad\forall k\geq 2,
		$$ 
		it follows that  $$a_{k+1} < \frac{a_2}{k} + M^2 \frac{\ln k}{k}, \quad \forall k\ge 2.$$
		Thus $$a_{k+1} < \left(a_2 + M^2\right)\frac{\ln k}{k} \le 2(a_2+M^2)\frac{\ln(k+1)}{k+1}, \quad \forall k\ge 3.$$
		Therefore, the sequence $\{a_k\}$ converges to $0$ with rate $O\left(\frac{\ln k}{k} \right)$. In other words, $\|x^\ast - x_k\|$ converges to $0$ with rate $O\left( \sqrt{\frac{\ln k}{k}}\right)$. \qed
		
	\end{Proof}
	
	\subsection{The case $p\in \left(\frac{1}{2}, 1\right)$}
	
	\begin{Theorem}\label{rate_1/2 < p < 1}
		Let $\{x_k\}$ be the sequence generated by Algorithm \ref{new_algorithm} with stepsizes $\lambda_k = \frac{1}{k^p}$ for all $k\ge 1$ where $p\in \left(\frac{1}{2}, 1\right)$ and $x^\ast$ be the solution of $\vi(K, F)$. Then $\|x^\ast - x_k\|$ converges to $0$ with rate $O\left(k^{\frac{1}{2} - p}\right)$.
	\end{Theorem}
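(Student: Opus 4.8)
The plan is to prove by induction the polynomial decay estimate
$$a_k \le C\, k^{1-2p}, \qquad \forall k \ge 1,$$
for a suitably chosen constant $C>0$, where $a_k = \|x^\ast-x_k\|^2$ as in \eqref{rate_ineq}. Since $1-2p<0$, this forces $a_k\to 0$ and, on taking square roots, $\|x^\ast-x_k\| = O\!\left(k^{(1-2p)/2}\right) = O\!\left(k^{\frac12-p}\right)$, which is the assertion. With $\lambda_k = k^{-p}$, inequality \eqref{rate_ineq} becomes $a_{k+1}\le (1-k^{-p})a_k + M^2 k^{-2p}$, and $1-k^{-p}\ge 0$ for every $k\ge 1$; I would take $C := \max\bigl\{a_1,\ \tfrac{M^2}{2(1-p)}\bigr\}$, so the base case $a_1\le C$ is immediate.

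For the inductive step, assuming $a_k \le C k^{1-2p}$ and using $1-k^{-p}\ge 0$, one gets
$$a_{k+1} \le (1-k^{-p})\,C k^{1-2p} + M^2 k^{-2p} = C k^{1-2p} - C k^{1-3p} + M^2 k^{-2p}.$$
Since $p>\tfrac12$, the map $t\mapsto t^{1-2p}$ is convex on $(0,\infty)$, so $(k+1)^{1-2p}\ge k^{1-2p} + (1-2p)k^{-2p}$; hence it is enough to verify
$$-C k^{1-3p} + M^2 k^{-2p} \le -(2p-1)\,C k^{-2p},$$
which, after multiplication by $k^{2p}$, reads $M^2 \le C\bigl(k^{1-p} - (2p-1)\bigr)$. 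For $k\ge 1$ we have $k^{1-p}\ge 1$, so the right-hand side is at least $2C(1-p)\ge M^2$ by the choice of $C$; thus the inequality holds for all $k\ge 1$ and the induction closes.

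The crux of the argument is the comparison between the ``descent'' contribution $C k^{1-3p}$ produced by the factor $(1-k^{-p})$ and the $O(k^{-2p})$ error terms: it works precisely because $1-3p>-2p$, i.e. because $p<1$, and this is what makes the exponent in the conclusion depend on $p$. The hypothesis $p>\tfrac12$ plays a double role: it makes the target exponent $1-2p$ negative (so the bound is a genuine convergence rate) and it makes $t\mapsto t^{1-2p}$ convex, which legitimizes the one-sided estimate for $(k+1)^{1-2p}$. An alternative route is to unroll \eqref{rate_ineq} into $a_{k+1}\le a_1\prod_{j=1}^{k}(1-\lambda_j) + M^2\sum_{j=1}^{k}\lambda_j^2\prod_{i=j+1}^{k}(1-\lambda_i)$ and bound the two pieces via $\prod_{j}(1-\lambda_j)\le\exp\!\bigl(-\sum_j\lambda_j\bigr)$ and a Chung-type summation lemma, but the direct induction above keeps the proof self-contained and parallels the treatment of the case $p=1$.
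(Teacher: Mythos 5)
Your proof is correct, and it takes a genuinely different route from the paper's. The paper first weakens the contraction factor via $1-k^{-p}\le 1-k^{-1}$, unrolls the resulting recursion exactly as in the $p=1$ case to get $a_{k+1}\le \frac{a_2}{k}+\frac{M^2}{k}\sum_{j=2}^{k}j^{1-2p}$, and then bounds the partial sum by the integral $\int_1^k x^{1-2p}\,dx=\frac{k^{2-2p}-1}{2-2p}$, arriving at $a_{k+1}\le\bigl(a_2+\frac{M^2}{2-2p}\bigr)k^{1-2p}$. You instead guess the ansatz $a_k\le Ck^{1-2p}$ and close a one-step induction, using the first-order convexity inequality $(k+1)^{1-2p}\ge k^{1-2p}+(1-2p)k^{-2p}$ to absorb the error term; I checked the reduction to $M^2\le C\bigl(k^{1-p}-(2p-1)\bigr)$ and the choice $C=\max\bigl\{a_1,\tfrac{M^2}{2(1-p)}\bigr\}$, and everything goes through (note both $2(1-p)>0$ and $C>0$ are needed and hold). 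Your argument buys an explicit constant valid from $k=1$ on and retains the full step $1-k^{-p}$ rather than discarding it, while the paper's version is more mechanical in that it recycles the $p=1$ computation verbatim and only changes the final summation estimate; both yield the same rate $O\!\left(k^{1-2p}\right)$ for $a_k$ and hence $O\!\left(k^{\frac12-p}\right)$ for $\|x^\ast-x_k\|$, since the $O(k^{-2p})$ noise, not the contraction strength, is what limits the decay. Your closing remark about unrolling with $\prod_j(1-\lambda_j)\le\exp\bigl(-\sum_j\lambda_j\bigr)$ is essentially a third viable route, closer in spirit to what the paper does in the case $p\in\bigl(0,\tfrac12\bigr]$.
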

	
	\begin{Proof}
		From inequality \eqref{rate_ineq},  for every $k\geq 1$, we have 
		$$a_{k+1} \le \left(1 - \frac{1}{k^p} \right) a_k + M^2 \cdot \frac{1}{k^{2p}}\le \left(1 - \frac{1}{k} \right) a_k + M^2 \cdot \frac{1}{k^{2p}}.$$
		Following the proof of Theorem \ref{rate_p=1}, we get $$a_{k+1}\le \frac{a_2}{k} + \frac{M^2}{k} \left(\frac{1}{2^{2p-1}} + \frac{1}{3^{2p-1}} + \cdots + \frac{1}{k^{2p-1}} \right), \quad \forall k\ge 2.$$
		Since $1-2p < 0$, the function $x^{1-2p}$ is decreasing on $[1, \infty)$. It follows that $$\frac{1}{2^{2p-1}} + \frac{1}{3^{2p-1}} + \cdots + \frac{1}{k^{2p-1}} < \int_1^k x^{1-2p} dx = \frac{k^{2-2p}-1}{2-2p},$$
		then $$a_{k+1} \le \frac{a_2}{k} + \frac{M^2}{2-2p} k^{1-2p} \le  \left(a_2 + \frac{M^2}{2-2p} \right)k^{1-2p}\le 2\left(a_2 + \frac{M^2}{2-2p} \right)(k+1)^{1-2p}, \quad \forall k\ge 1.$$
		This implies the conclusion of the theorem. \qed
	\end{Proof}
	
	\subsection{The case $p\in \left(0, \frac{1}{2}\right]$}
	
	\begin{Theorem}
		Let $\{x_k\}$ be the sequence generated by Algorithm \ref{new_algorithm} with stepsizes $\lambda_k = \frac{1}{k^p}$ for all $k\ge 1$ where $p\in \left(0, \frac{1}{2}\right]$ and $x^\ast$ be the solution of $\vi(K, F)$. Then $\|x^\ast - x_k\|$ converges to $0$ with rate $O\left(k^{-\frac{p}{2}}\right)$.
	\end{Theorem}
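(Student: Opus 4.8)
The plan is to show that the squared error $a_k := \|x^\ast - x_k\|^2$ satisfies $a_k \le C\,k^{-p}$ for every $k\ge 1$, where $C$ is a suitable constant depending only on $M$, $p$ and $a_1$; taking square roots then gives $\|x^\ast - x_k\| \le \sqrt{C}\,k^{-p/2}$, which is exactly the asserted rate. As in the previous two cases I would start from inequality \eqref{rate_ineq}, which with $\lambda_k = k^{-p}$ reads
$$a_{k+1} \le \left(1 - \frac{1}{k^{p}}\right) a_k + \frac{M^2}{k^{2p}}, \qquad \forall k\ge 1,$$
and record that $1 - k^{-p} \ge 0$ for all $k\ge 1$ and that $a_1$ is a finite number. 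I would then fix $C := \max\{a_1,\, 2M^2\}$ and argue by induction on $k$.

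The base case $a_1 \le C = C\cdot 1^{-p}$ is immediate. For the inductive step, assume $a_k \le C\,k^{-p}$. Substituting this into the recursion and using $1 - k^{-p}\ge 0$ gives
$$a_{k+1} \le \left(1 - \frac{1}{k^{p}}\right) C\,k^{-p} + M^2 k^{-2p} = C\,k^{-p} - (C - M^2)\,k^{-2p},$$
so it suffices to verify $(C - M^2)\,k^{-2p} \ge C\big(k^{-p} - (k+1)^{-p}\big)$. For the right-hand side I would use the elementary bound $k^{-p} - (k+1)^{-p} = \int_k^{k+1} p\,x^{-p-1}\,dx \le p\,k^{-p-1}$, followed by $k^{-p-1} \le k^{-2p}$, which holds precisely because $p \le 1$ (and here $p \le \tfrac12$). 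Hence it is enough to have $C - M^2 \ge C p$, i.e. $C(1-p) \ge M^2$; since $p \le \tfrac12$ we have $\tfrac{1}{1-p} \le 2$, so $C \ge 2M^2 \ge \tfrac{M^2}{1-p}$ does the job. This closes the induction, and squaring back yields the conclusion.

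I do not expect a genuine obstacle: the argument is a short induction once the target bound $C\,k^{-p}$ and the constant $C$ are chosen correctly, so the only non-routine step is guessing that bound — which is natural, since one anticipates $a_k$ to be of the order of the stepsize $\lambda_k = k^{-p}$. Conceptually, the reason the rate saturates at $k^{-p/2}$ throughout this range is that the perturbation term $M^2\lambda_k^2 = M^2k^{-2p}$ in \eqref{rate_ineq} is of order no smaller than the natural decrease scale $k^{-p-1}$ exactly when $p\le 1$; it is this term, rather than the contraction factor $1-\lambda_k$, that governs the asymptotics.
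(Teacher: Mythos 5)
Your proof is correct, but it takes a genuinely different and substantially more elementary route than the paper. The paper unrolls \eqref{rate_ineq} into a product-form bound
$a_{k+1}\le \prod_{j=2}^{k}\left(1-j^{-p}\right)a_2+\frac{M^2}{k^p}u_k$
with an auxiliary recursively defined sequence $\{u_k\}$, and then spends most of the argument proving $u_k\to 1$ via Lemma \ref{lemma}, which requires several mean-value-theorem estimates (positivity of $\eta_k$, $\delta_k\to 0$, and the non-summability bound $\eta_k>(k+1)^{-2p}$, the last of which is where the restriction $p\le\frac12$ enters for them). Your direct induction on the target bound $a_k\le Ck^{-p}$ with $C=\max\{a_1,2M^2\}$ replaces all of this with the one-line estimate $k^{-p}-(k+1)^{-p}\le p\,k^{-p-1}\le p\,k^{-2p}$ and the constant check $C(1-p)\ge M^2$; every step I verified goes through, including the degenerate first step $k=1$ where $1-\lambda_1=0$. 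Beyond brevity, your argument buys two things: an explicit constant, and more generality --- the only place $p\le\frac12$ is used is to bound $\frac{1}{1-p}\le 2$, so the same induction with $C=\max\{a_1,\tfrac{M^2}{1-p}\}$ yields $a_k\le Ck^{-p}$ for every $p\in(0,1)$, which for $p\in(\tfrac12,1)$ is actually sharper than the rate $O(k^{1-2p})$ established in the paper's Theorem \ref{rate_1/2 < p < 1} (note $-p<1-2p$ there). It is also the correct order, since the quasi-stationary solution of the recursion \eqref{rate_ineq} is $a_k\sim M^2k^{-p}$, matching your closing heuristic.
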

	
	\begin{Proof}
		From inequality \eqref{rate_ineq}, we have
		\begin{equation} \label{rate_ineq'_p<1/2}
		a_{k+1} \le \left(1 - \frac{1}{k^p} \right) a_k + M^2 \frac{1}{k^{2p}}, \quad \forall k\ge 1.
		\end{equation}
		Let $\{u_k\}$ be defined recursively as $$u_2 = \frac{1}{2^p},\, u_k = \frac{k^p-1}{(k-1)^p}u_{k-1}+\frac{1}{k^p} , \quad \forall k\ge 3.$$
		Firstly, we prove by induction that 
		\begin{equation}\label{rate_ineq_p<1/2}
		a_{k+1} \le \left(1 - \frac{1}{k^p} \right)\left(1 - \frac{1}{(k-1)^p} \right) \cdots \left(1-\frac{1}{2^p}\right) a_2 + \frac{M^2}{k^p} u_k, \quad \forall k\ge 2.
		\end{equation}
		If $k = 2$, \eqref{rate_ineq_p<1/2} becomes $$a_3 \le \left(1 - \frac{1}{2^p} \right)a_2 + \frac{M^2}{2^p} u_2,$$
		which is true by \eqref{rate_ineq'_p<1/2}. 
		Suppose \eqref{rate_ineq_p<1/2} is true for $k$. By \eqref{rate_ineq'_p<1/2} and induction hypothesis, we have $$\begin{aligned} a_{k+2} &\le \left[1 - \frac{1}{(k+1)^p} \right] a_{k+1} + \frac{M^2}{(k+1)^{2p}}\\& \le \left[1 - \frac{1}{(k+1)^p} \right] \left[\left(1 - \frac{1}{k^p} \right)\left(1 - \frac{1}{(k-1)^p} \right) \cdots \left(1-\frac{1}{2^p}\right) a_2 + \frac{M^2}{k^p}u_k \right] +  \frac{M^2}{(k+1)^{2p}} \\& = \left[1 - \frac{1}{(k+1)^p} \right]\left(1 - \frac{1}{k^p} \right)\left[1 - \frac{1}{(k-1)^p} \right] \cdots \left(1-\frac{1}{2^p}\right) a_2 + \frac{M^2}{(k+1)^p}\left[\frac{(k+1)^p-1}{k^p}u_k+\frac{1}{(k+1)^p} \right] \\& = \left[1 - \frac{1}{(k+1)^p} \right]\left(1 - \frac{1}{k^p} \right)\left[1 - \frac{1}{(k-1)^p} \right] \cdots \left(1-\frac{1}{2^p}\right) a_2 + \frac{M^2}{(k+1)^p}u_{k+1}.\end{aligned}$$
		By induction principle, \eqref{rate_ineq_p<1/2} is true for all $k\ge 2$.
		
		\medskip\noindent
		Secondly, we  show that $\displaystyle \lim_{k\to \infty} u_k = 1$. By direct calculations, we have $$u_4 = 1+(4^p-1)\left(\frac{1}{9^p} - \frac{1}{12^p}\right) > 1.$$
		If $u_{k-1} > 1$ then $$u_k = \frac{k^p-1}{(k-1)^p}u_{k-1}+\frac{1}{k^p} > \frac{k^p-1}{(k-1)^p}+\frac{1}{k^p} > \frac{k^p-1}{k^p}+\frac{1}{k^p} = 1,$$
		thus $u_k > 1$ for all $k\ge 4$.
		Denote $v_k = u_k - 1$, then $\{v_k\}_{k\ge 4}$ is a positive sequence and $$v_{k} =  \frac{k^p-1}{(k-1)^p} v_{k-1}  + \frac{1}{k^p} + \frac{k^p-1}{(k-1)^p} - 1, \quad \forall k\ge 5.$$
		Let $$\eta_k = 1 - \frac{(k+1)^p-1}{k^p} \quad \text{and} \quad \delta_k = \left(\frac{1}{(k+1)^p} + \frac{(k+1)^p-1}{k^p} - 1\right)\frac{1}{1 - \frac{(k+1)^p-1}{k^p}}, \quad \forall k\ge 4,$$
		then $$v_{k+1} = (1-\eta_k)v_k + \eta_k\delta_k, \quad \forall k\ge 4.$$
		It is clear that $\displaystyle \lim_{k\to \infty} \eta_k = 0$. We will prove $\{\eta_k\}$ is a positive sequence. By Lagrange theorem, there exists $c_k\in (k, k+1)$ such that $$\eta_k = \frac{k^p-(k+1)^p+1}{k^p} = \frac{1 - pc_k^{p-1}}{k^p} > \frac{1 - c_k^{p-1}}{k^p} > 0, \quad \forall k\ge 4.$$  
		Next, we will prove that $\displaystyle \lim_{k\to \infty} \delta_k = 0$. We have $$\delta_k = \frac{k^p}{(k+1)^p(k^p - (k+1)^p+1)} - 1 = \frac{k^p}{(k+1)^p} \cdot \frac{1}{k^p - (k+1)^p + 1}  - 1.$$
		Since $\displaystyle \lim_{k\to \infty}\frac{k^p}{(k+1)^p} = 1$, we only need to prove $$\lim_{k\to \infty} \frac{1}{k^p - (k+1)^p + 1} = 1, \quad \text{or} \quad \lim_{k\to \infty}\left(k^p - (k+1)^p\right) = 0.$$
		By Lagrange theorem, for all $k$, there exists $c_k\in (k, k+1)$ such that $$k^p - (k+1)^p = -pc_k^{p-1}.$$
		When $k$ tends to $\infty$, $c_k$ also tends to $\infty$. Since $p < 1$, it follows $$|k^p - (k+1)^p| = pc_k^{p-1} \to 0 \quad \text{as} \quad k\to \infty.$$
		Therefore, $\displaystyle \lim_{k\to \infty} \delta_k = 0$.
		
		\medskip\noindent
		We continue to prove that $$\eta_k = 1 - \frac{(k+1)^p-1}{k^p} > \frac{1}{(k+1)^{2p}}, \quad \forall k\ge 4.$$
		This inequality is equivalent to $$\frac{(k+1)^{2p}-1}{(k+1)^{2p}} > \frac{(k+1)^p - 1}{k^p},$$
		or $$\frac{(k+1)^p+1}{(k+1)^{2p}} > \frac{1}{k^p}.$$
		We rewrite the above inequality as $$k^{-p} - (k+1)^{-p} < \frac{1}{(k+1)^{2p}}.$$
		By Lagrange theorem, there exists $c_k \in (k, k+1)$ such that $$k^{-p} - (k+1)^{-p} = \frac{p}{c_k^{p+1}}.$$
		Since $p+1 > 2p$ and $c_k > k$, for all $k\ge 4$ we have $$\begin{aligned} \frac{p}{c_k^{p+1}} &< \frac{1}{k^{p+1}} = \left(\frac{k+1}{k}\right)^{2p} \cdot \frac{1}{k^{1-p}} \cdot \frac{1}{(k+1)^{2p}} \\&< 2^{2p}\cdot \frac{1}{4^{1-p}}\cdot \frac{1}{(k+1)^{2p}} = \frac{1}{2^{2-4p}}\cdot \frac{1}{(k+1)^{2p}} < \frac{1}{(k+1)^{2p}}. \end{aligned}$$
		This leads to our desired inequality. Since $p < \frac{1}{2}$, it follows that $\displaystyle \sum_{k=4}^\infty \eta_k = \infty$.
		
		\medskip\noindent
		By Lemma \ref{lemma}, we have $\displaystyle \lim_{k\to \infty} v_k = 0$. Thus $\displaystyle \lim_{k\to \infty} u_k = 1$ which means $\{u_k\}$ is bounded above by some $C > 0$.
		
		\medskip\noindent
		Finally, following \eqref{rate_ineq_p<1/2}, 
		for all $k\geq 2$, we have 
		$$\begin{aligned} a_{k+1} &\le \left(1 - \frac{1}{k^p} \right)\left(1 - \frac{1}{(k-1)^p} \right) \cdots \left(1-\frac{1}{2^p}\right) a_2 + \frac{M^2}{k^p} u_k \\&\le \left(1 - \frac{1}{k} \right) \left( 1 - \frac{1}{k-1} \right) \cdots \left(1 - \frac{1}{2} \right) a_2 + \frac{M^2 C}{k^p} \\& = \frac{a_2}{k} + \frac{M^2 C}{k^p} \\&\le (a_2 + M^2 C) \frac{1}{k^p} \\& < 2(a_2 + M^2 C) \frac{1}{(k+1)^p}. \end{aligned}$$
		Therefore $\|x^\ast - x^k\|$ converges to $0$ with rate $O\left(k^{-\frac{p}{2}}\right)$. \qed
	\end{Proof}

	\section{Numerical experiments and comparison with related works}
	
	In this section, we will run some numerical experiments and compare our results with a recent work in  \cite{hai}. We recall the main algorithm  and its convergence results in 
	\cite{hai}.  
	
	\begin{Algorithm} {(see \cite[Algorithm 3.1]{hai})}
		\label{algo:hai}
		\begin{quote}
			{\bf Data.} Select $x_1\in K$ and a non-increasing sequence $\{\lambda_k\} \subset (0, \infty)$ satisfying $\lambda_k \to 0$ and $\displaystyle \sum_{k=1}^\infty \lambda_k = \infty$.
			
			{\bf Step 0:} Set $k = 1$.
			
			{\bf Step 1:} Compute $x_{k+1} = \pr_K\left(x_k - \frac{\lambda_k}{\max\{1, \|F(x_k)\|^2\}} F(x_k)\right)$.
			
			{\bf Step 2:} Check $x_{k+1} = x_k$. {\bf If} Yes {\bf then} Stop. {\bf Else} set $k = k+1$ and go to {\bf Step 1}.
		\end{quote}
	\end{Algorithm}
	\begin{Theorem} {\rm (see \cite[Theorem 3.1]{hai})}
		\label{thm:hai}
		Let $K\subset H$ be a non-empty closed convex set, $F: K\to H$ be strongly pseudomonotone and bounded on bounded subsets of $K$. Suppose that $\vi(K, F)$ is solvable. Then every sequence $\{x_k\}$ produced by Algorithm \ref{algo:hai} converges in norm to the unique solution of $\vi(K, F)$.
	\end{Theorem}
	
	We first compare the assumptions in Theorem~\ref{new-grad} and Theorem~\ref{thm:hai}.
	
	\begin{table}[ht]
		\centering
		\begin{tabular}{ |c| C{6cm} | C{6cm} | } 
			\hline
			\textbf{} & \bf{Theorem~\ref{new-grad}} & \bf{Theorem~\ref{thm:hai}} \\
			\hline
			Space & Hilbert & Hilbert \\ 
			\hline
			The set $K$ & Non-empty closed convex & Non-empty closed convex\\
			\hline
			The operator $F$ & Strongly pseudomonotone & Strongly pseudomonotone \\
			& Bounded on $K$ & Bounded on bounded subsets of $K$ \\
			\hline
			Stepsize & $\lambda_k$ &  $\frac{\lambda_k}{\max\{1, \|F(x_k)\|^2\}}$ \\
			\hline 	
		\end{tabular}
		\caption{\small Comparison in hypotheses of Theorem~\ref{new-grad} and Theorem~\ref{thm:hai}.}	
		\label{table:comparison_hypotheses}	
	\end{table}
	We can see that the main differences between Theorem~\ref{new-grad} and Theorem~\ref{thm:hai}  are the hypotheses on the operator $F$ and the choice of stepsizess. We will analyze each of these differences.
	\begin{enumerate}
		\item {\it About the hypothesis on the operator $F$:} although our hypothesis on $F$ in Theorem \ref{new-grad} is stronger than the hypothesis in Theorem~\ref{thm:hai}, we will give an in-depth analysis here. As discussed in Remark \ref{remark_boundedness}, the boundedness of $F$ in Theorem \ref{new-grad} can be replaced by a much weaker hypothesis, i.e. there exists a point $x\in K$ such that $F$ is bounded on $K^\prime = K\cap \overline{B\left(x, \frac{1}{\gamma} \|F(x)\|\right)}$, where $\overline{B\left(x, \frac{1}{\gamma} \|F(x)\|\right)}$ is the closed ball with center $x$ and radius $\frac{1}{\gamma} \|F(x)\|$. This hypothesis is much weaker than one in \cite[Theorem 3.1]{hai}, since we only need $F$ to be bounded on {\bf one} bounded subset of $K$, while \cite[Theorem 3.1]{hai} assumes $F$ to be bounded on {\bf every} bounded subset of $K$. However, projecting on $K^\prime$ may be practically more difficult than projecting on the original set $K$, so we keep such a ``strong" hypothesis in Theorem \ref{new-grad}.

		\item {\it About the stepsize:} our choice of stepsizes in Algorithm~\ref{new_algorithm} does not depend of the operator $F$. This leads to an advantage of our algorithm over Algorithm~\ref{algo:hai}: we can estimate the rate of convergence of the algorithm for each sequence of stepsizes. The convergence process of Algorithm~\ref{algo:hai}, on the other hand, may fluctuate depending on the operator $F$ as we will show below. 
	\end{enumerate}
	
	We test the convergence process of Algorithm \ref{new_algorithm} and Algorithm~\ref{algo:hai} in three different examples: \cite[Example 5.1]{hai}, \cite[Example 5.3]{hai} and Example \ref{ex:not-bounded-on-K}. The experiments are conducted in Python 3.7 with processor Intel(R) Core(TM) i7-1065G7 CPU @ 1.30GHz (8 CPUs). Let us formally recall \cite[Example 5.1]{hai}, \cite[Example 5.3]{hai} and Example \ref{ex:not-bounded-on-K}: 
	
	\begin{itemize}
		\item For \cite[Example 5.1]{hai}, the operator $F$ is $F(x) = Ax$ where $A$ is a randomly generated positive definite matrix and $K$ is the unit cube. In this experiment, we choose $A = M^T M + I$ where $M$ is an arbitrary matrix of which each entry is sampled from a standard Gaussian distribution and $I$ is the identity matrix; 
		
		\item For \cite[Example 5.3]{hai}, the operator $F$ is $F(x) = \left(\frac{1}{\|x\|} - \frac{1}{2}\right) x$ if $x\neq 0$ and $F(0) = 0$, while the set $K$ is the closed sphere with center $0$ and radius $\frac{1}{2}$;
		
		\item For Example \ref{ex:not-bounded-on-K}, the operator $F$ is $F(x) = 2^{\|x\|}x$. Following the same pattern in Example~\ref{ex:not-bounded-on-K}, we can prove that Algorithm \ref{new_algorithm} will not converge if $K = \R^n$. In this experiment, we choose $K$ to be the unit sphere.
	\end{itemize}
	
	In all examples, 
	\begin{itemize}
		\item we choose the sequence $\{\lambda_k\}$ to be $\frac{1}{k^p}$, where $0 < p \le 1$. In each case, we choose $p$ that yields the best performance for each algorithm. To be specific:
		\begin{itemize}
			\item[$\circ$] In \cite[Example 5.1]{hai}, we choose $p=1$ for Algorithm \ref{new_algorithm} and $p=0.1$ for Algorithm~\ref{algo:hai}.
			
			\item[$\circ$] In \cite[Example 5.3]{hai}, we choose $p=1$ for both algorithms.
			
			\item[$\circ$] In Example \ref{ex:not-bounded-on-K}, we choose $p=0.1$ for both algorithms.
		\end{itemize}
		
		\item $x_0$ is sampled from the standard Gaussian distribution.
	\end{itemize}
	
	The unique solution for $\vi(K, F)$ is $0$ in all three examples. We test the convergence processes of both algorithms in three different cases of $H$: $H = \R^{100}, H = \R^{500}$ and $H = \R^{1000}$. The results are presented in Figure \ref{fig:convergence_r100}, Figure \ref{fig:convergence_r500} and Figure \ref{fig:convergence_r1000}, respectively.

	Table \ref{table:time_iter} shows records of time and iterations of convergence processes of both algorithms in case $H = \R^{100}$ and $H = \R^{500}$. We do not report the result of the case $H = \R^{1000}$ in this table since Algorithm~\ref{algo:hai} converges too slowly in \cite[Example 5.1]{hai} (see the first sub-figure in Figure \ref{fig:convergence_r1000}). To be specific, Algorithm~\ref{algo:hai} takes more than 600,000 iterations to make $\|x_k\|$ less than $5$.
	
	\begin{table}[h!]
		\centering
		\begin{tabular}{cc|c|c||c|c||c|c|}
			\cline{3-8}
			&                                                 & \multicolumn{2}{c||}{\cite[Example 5.1]{hai}} & \multicolumn{2}{c||}{\cite[Example 5.3]{hai}} & \multicolumn{2}{c|}{Example \ref{ex:not-bounded-on-K}} \\ \cline{3-8} 
			&                                                 & Time                          & Iterations                        & Time                          & Iterations                        & Time                             & Iterations                           \\ \hline \hline
			\multicolumn{1}{|c|}{\multirow{2}{*}{$\R^{100}$}} & Algorithm \ref{new_algorithm} & \textbf{0.036s}                        & \textbf{221}                               & \textbf{0.004s}                        & 23                                & 0.004s                           & 28                                   \\ \cline{2-8} 
			\multicolumn{1}{|c|}{}                           & Algorithm~\ref{algo:hai}  & 0.79s                         & 3,523                              & 0.005s                        & 23                                & 0.004s                           & \textbf{22}                                 \\ \hline \hline
			\multicolumn{1}{|c|}{\multirow{2}{*}{$\R^{500}$}} & Algorithm \ref{new_algorithm} & \textbf{0.188s}                       & \textbf{947}                               & \textbf{0.005s}                        & 23                                & \textbf{0.004s}                            & 23                               \\ \cline{2-8} 
			\multicolumn{1}{|c|}{}                           & Algorithm~\ref{algo:hai}  & 39.2s                           & 127,010                          & 0.006s                        & 23                                & 0.005s                               & \textbf{22}                                   \\ \hline
		\end{tabular}
		\caption{\small Numerical experiments for both algorithms on different examples when $H = \R^{100}$ and $H = \R^{500}$. The iterative process stops when the average of $\|x_k\|$ in $20$ most recent iterations is less than $5\cdot 10^{-2}$. The results shown in the table are averages of $100$ distinct runs.}
		\label{table:time_iter}
	\end{table}

	While both algorithms act almost the same for \cite[Example 5.3]{hai} and Example \ref{ex:not-bounded-on-K}, there is a big difference in \cite[Example 5.1]{hai}. In specific, convergence processes of Algorithm \ref{new_algorithm} in three figures are similar, while Algorithm~\ref{algo:hai} sees a much slower convergence compared to Algorithm \ref{new_algorithm}.
	
	\begin{figure}[h!]
		\begin{multicols}{3}
			\includegraphics[width=6cm, height=4cm]{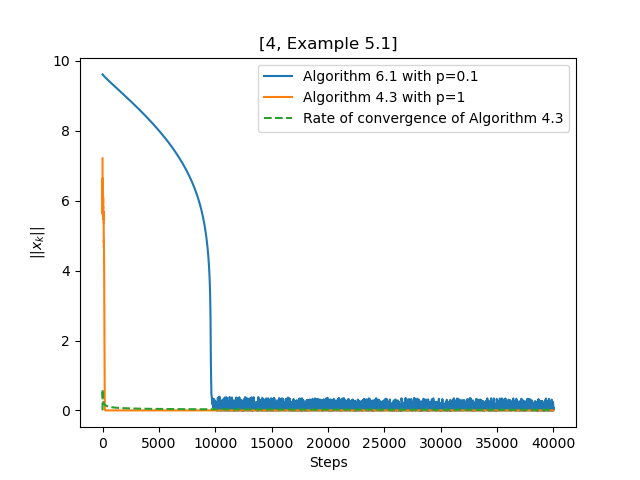} \par 
			\includegraphics[width=6cm, height=4cm]{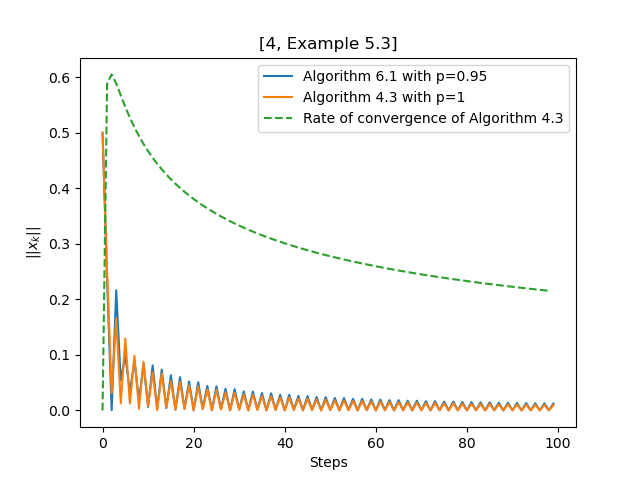} \par
			\includegraphics[width=6cm, height=4cm]{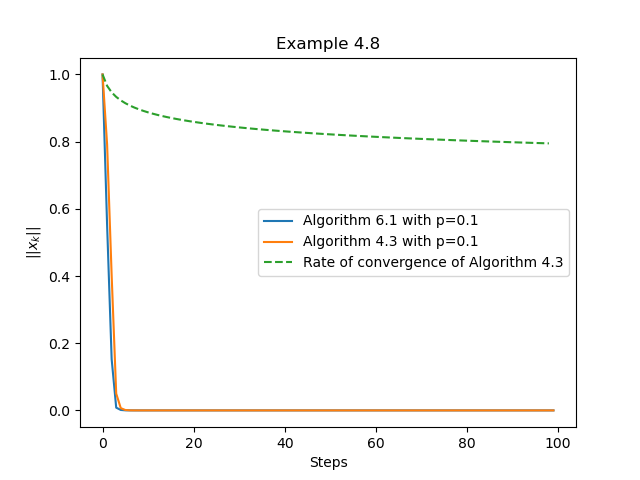} \par
		\end{multicols}
		\caption{\small Convergence of iterative sequences for different examples in $\R^{100}$. In \cite[Example 5.3]{hai}, for better visualization, we choose $p=0.95$ for Algorithm~\ref{algo:hai} and $p=1$ for Algorithm \ref{new_algorithm}. In fact, $p=1$ is the best choice for both algorithms. The same modification is applied for the case $H = \R^{500}$ and $H = \R^{1000}$.}
		\label{fig:convergence_r100}
	\end{figure}
	
	\begin{figure}[h!]
		\begin{multicols}{3}
			\includegraphics[width=6cm, height=4cm]{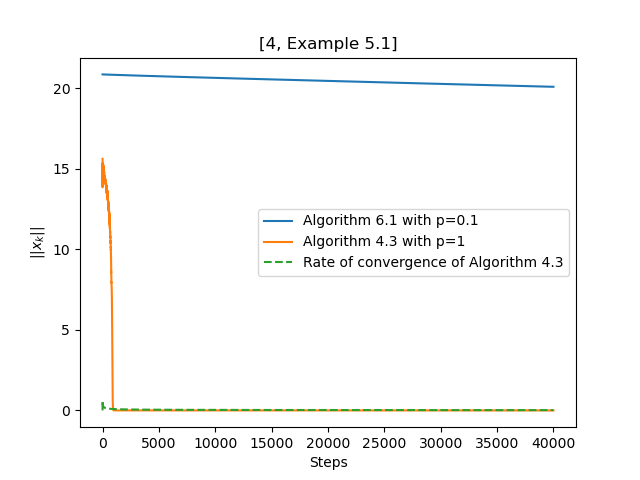} \par 
			\includegraphics[width=6cm, height=4cm]{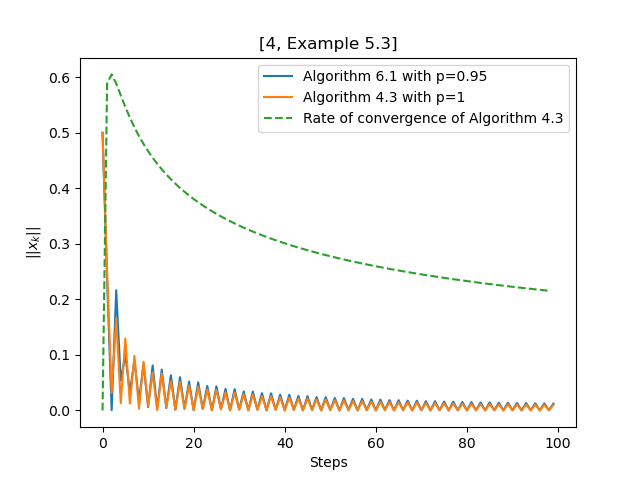} \par
			\includegraphics[width=6cm, height=4cm]{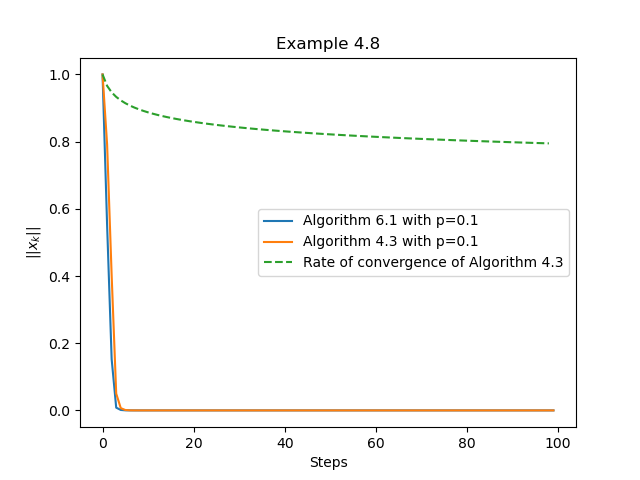} \par
		\end{multicols}
		\caption{\small Convergence of iterative sequences for different examples in $\R^{500}$.}
		\label{fig:convergence_r500}
	\end{figure}
	
	\begin{figure}[h!]
		\begin{multicols}{3}
			\includegraphics[width=6cm, height=4cm]{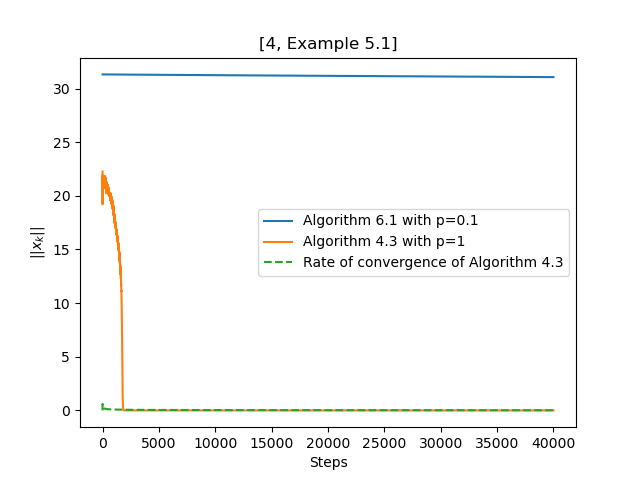} \par 
			\includegraphics[width=6cm, height=4cm]{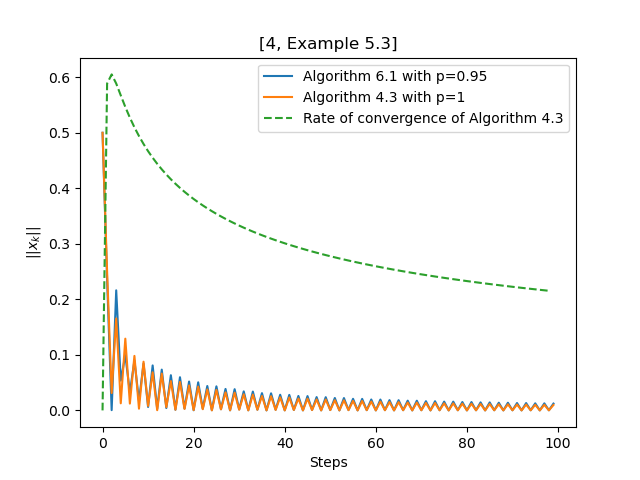} \par
			\includegraphics[width=6cm, height=4cm]{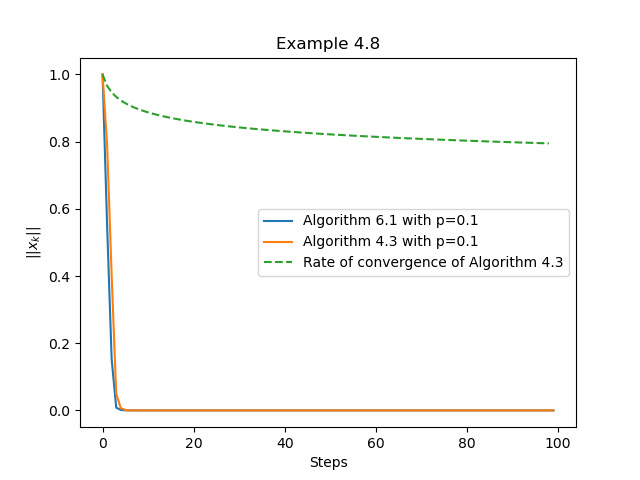} \par
		\end{multicols}
		\caption{\small Convergence of iterative sequences for different examples in $\R^{1000}$.}
		\label{fig:convergence_r1000}
	\end{figure}
	
	\begin{figure}[h!]
		\centering
		\begin{multicols}{2}
			\includegraphics[width=8cm, height=6cm]{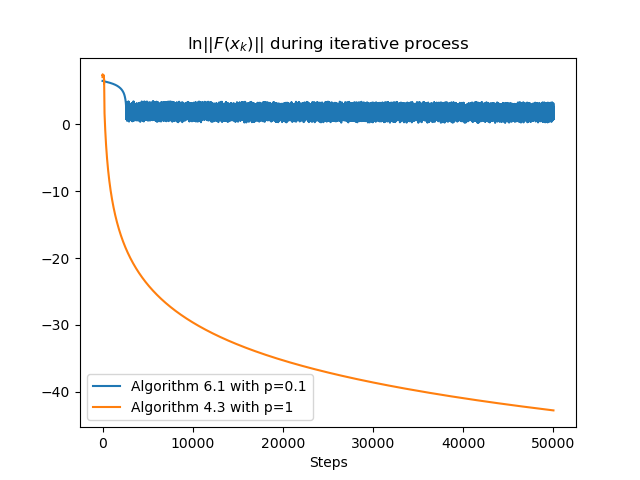}
			\includegraphics[width=8cm, height=6cm]{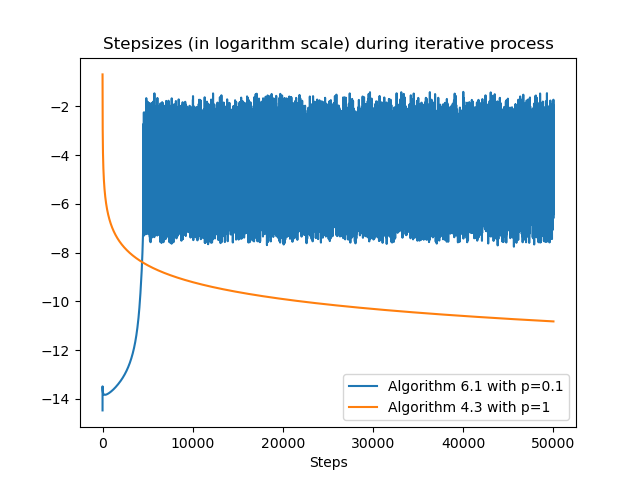}
		\end{multicols}
		
		\begin{multicols}{2}
			\includegraphics[width=8cm, height=6cm]{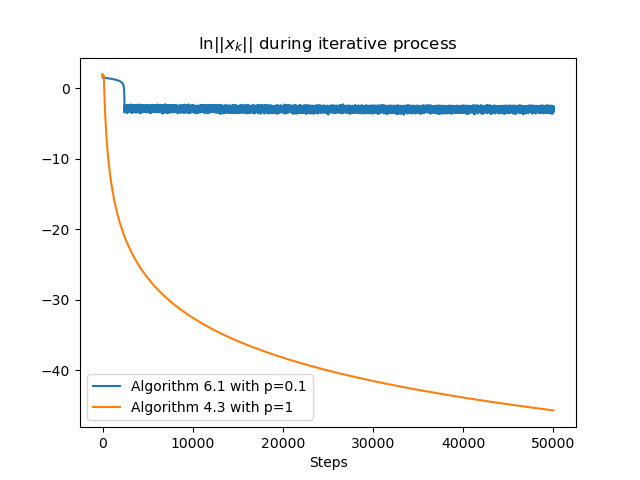}
			\includegraphics[width=8cm, height=6cm]{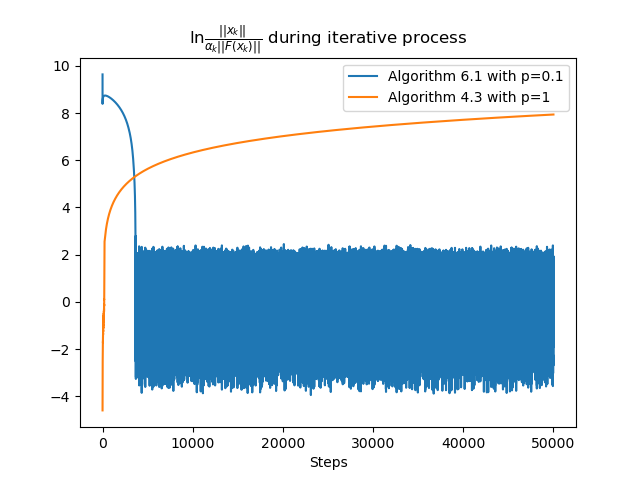}
		\end{multicols}
		\caption{\small Norm of $F(x_k)$, stepsizes sequence and the domination ratio of Algorithm \ref{new_algorithm} and Algorithm~\ref{algo:hai} during iterative process of \cite[Example 5.1]{hai} in case $H = \R^{100}$ (in the last subfigure, $\alpha_k$ is the stepsize at iteration $k$-th, i.e. $\alpha_k = \frac{1}{k}$ for Algorithm \ref{new_algorithm} and $\alpha_k = \frac{1/k^{0.1}}{\max\{1, \|F(x_k)\|^2\}}$ for Algorithm~\ref{algo:hai}).}
		\label{fig:norm_F(x_k)&stepsize}
	\end{figure}
	
	\begin{figure}[h!]
		\centering
		\includegraphics[width=7cm, height=5.5cm]{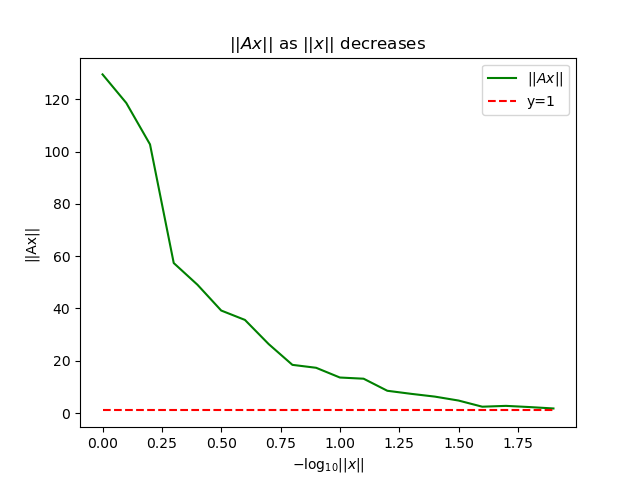}
		\caption{\small $\|Ax\|$ as $\|x\|$ decreases.}
		\label{fig:norm_Ax}
	\end{figure}	
	
	Let us formally explain this phenomenon when $H = \R^{100}$ via Figure \ref{fig:norm_F(x_k)&stepsize}. First, we would like to introduce some notations: we denote $\alpha_k^{(1)}$ as the stepsize of Algorithm \ref{new_algorithm} at step $k$, i.e. $\alpha_k^{(1)} = \frac{1}{k}$. Similarly, we denote $\alpha_k^{(2)}$ as the stepsize of Algorithm~\ref{algo:hai}, i.e. $\alpha_k^{(2)} = \frac{1/k^{0.1}}{\max\{1, \|F(x_k)\|^2\}}$. When it is not necessary to distinguish between the two, we denote $\alpha_k$ as the stepsize at step $k$ for either algorithm.
	
	At the beginning of the process, $\|F(x_k)\|$ is large, which means the stepsize $\alpha_k^{(2)}$ is very small. This makes the convergence of $\{x_k\}$ in Algorithm~\ref{algo:hai} slow. The sequence $\{x_k\}$ continues to decrease until it reaches somewhere around $10^{-2}$. This is where something interesting happens. Let us move our attention to the ratio $\frac{\|x_k\|}{\alpha_k \|F(x_k)\|}$, which we call the \textit{domination ratio}. As $k$ increases, the domination ratio for Algorithm \ref{new_algorithm} is 
	$$\frac{\|x_k\|}{\alpha_k^{(1)} \|F(x_k)\|} = \frac{k \|x_k\|}{\|Ax_k\|} \ge \frac{k}{\|A\|} \to \infty.$$
	
	This means in the latter part of the process, $x_k$ dominates the term $x_k - \alpha_k F(x_k)$. This helps the process of Algorithm \ref{new_algorithm} converge smoothly. On the other hand, the domination ratio of Algorithm~\ref{algo:hai} is 
	$$\frac{\|x_k\|}{\alpha_k^{(2)} \|F(x_k)\|} = k^{0.1} \|x_k\| \|F(x_k)\|.$$
	
	Note that when $\|x_k\|$ is close to (but still greater than) $10^{-2}$, $\|F(x_k)\|$ is close to $1$ (see Figure \ref{fig:norm_Ax}), and the value of $k$ is around $4000$. Thus 
	$$\frac{\|x_k\|}{\alpha_k^{(2)} \|F(x_k)\|} = k^{0.1} \|x_k\| \|F(x_k)\| \approx k^{0.1} \|x_k\| \approx 4000^{0.1} \cdot 10^{-2} \approx 0.02.$$
	
	This means $\alpha_k^{(2)} F(x_k)$ dominates the term $x_k - \alpha_k^{(2)} F(x_k)$! What happens next is clear in all four subfigures of Figure \ref{fig:norm_F(x_k)&stepsize}: everything fluctuates. That is the battleground for $x_k$ and $\alpha_k^{(2)} F(x_k)$ to gain domination in the term $x_k - \alpha_k^{(2)} F(x_k)$. This battle can be outlined as follow:
	\begin{enumerate}
		\item $\|x_k\|$ is close to $10^{-2}$, which makes
		\item the domination ratio small, which means
		\item $\alpha_k^{(2)} F(x_k)$ dominates, which means
		\item $x_{k+1}$ is heavily affected by $\alpha_k^{(2)} F(x_k)$, which means
		\item $\|x_{k+1}\|$ is large, which makes
		\item $\|F(x_{k+1})\|$ large, which makes
		\item the domination ratio large, which means
		\item $x_{k+1}$ dominates, which means
		\item $x_{k+2}$ is mainly affected by $x_{k+1}$, which means
		\item after some steps, say $s$ steps, $\|x_{k+s}\|$ is close to $10^{-2}$ again.
	\end{enumerate}
	This cycle makes the convergence process worse since then. 
	
	\begin{Remark}\label{remark:different_setting_K}
		In the above experiments, we run and report results of Example \ref{ex:not-bounded-on-K} when $K$ is the unit sphere. However, we know that Algorithm~\ref{algo:hai} can still converge when $K$ is the entire real $n$-dimensional space. In this remark, we will re-conduct the experiment for Example \ref{ex:not-bounded-on-K} with different settings for the set $K$. With Algorithm~\ref{algo:hai}, we take $K$ to be $\R^n$. With Algorithm \ref{new_algorithm}, we choose $K$ as discussed in Remark \ref{remark_boundedness}. First, we take an arbitrary $x_{\text{init}}$, say, the first unit vector in $\R^n$. By inequality \eqref{new_errorbound_xinK} in Remark \ref{new_errorbound}, we have 
		$$\|x^\ast - x_{\text{init}}\| \le \frac{1}{\gamma} \|F(x_{\text{init}})\| = 2^{\|x_{\text{init}}\|} \|x_{\text{init}}\| = 2,$$
		which means $x^\ast$ lies in the closed sphere with center $x_{\text{init}}$ and radius $2$. We take $K$ to be this set. Results of the experiment are reported in Figure \ref{fig:convergence_with_different_K}.
		
		\begin{figure}[h!]
			\begin{multicols}{3}
				\includegraphics[width=6cm, height=5cm]{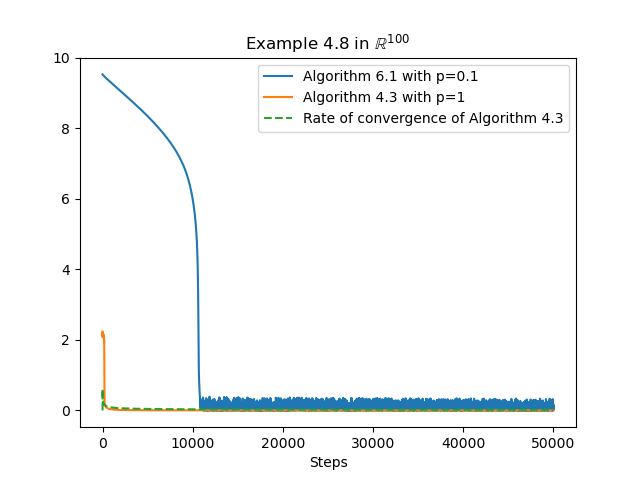} \par 
				\includegraphics[width=6cm, height=5cm]{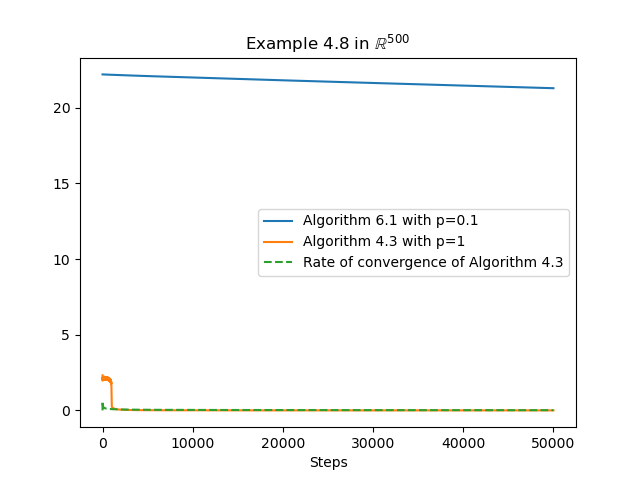} \par
				\includegraphics[width=6cm, height=5cm]{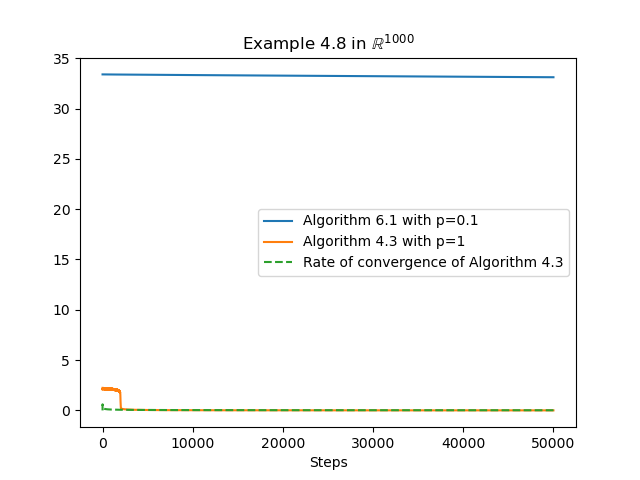} \par
			\end{multicols}
			\caption{\small Convergence of iterative sequences of the two algorithms in Example \ref{ex:not-bounded-on-K} with different settings on $K$.}
			\label{fig:convergence_with_different_K}
		\end{figure}
		
		As we can see, Algorithm~\ref{algo:hai} faces the same problem as pointed out in \cite[Example 5.1]{hai}. While Algorithm \ref{new_algorithm} converges fast as usual, Algorithm~\ref{algo:hai} suffers from the slow convergence at the beginning (due to the large denominator $\max\{1, \|F(x_k)\|^2\}$ of the stepsize) and the fluctuation phenomenon in the latter part of the process.
	\end{Remark}

	\section{Concluding remarks}
	
	In this article, we obtained an error bound and proved the convergence of iterative sequences generated by modified GPM for VIs governed by strongly pseudomonotone operators. Two counter-examples were given to show the necessity of Lipschitz continuity assumption in classical GPM as well as the boundedness hypothesis in modified GPM. Rate of convergence was estimated when the stepsizes are sequences of terms defining the $p$-series. We also conducted several numerical experiments and gave an in-depth comparison with a related algorithm.

	\medskip
	There are still some open questions for whom who may concern:
	\begin{enumerate}
		\item The extragradient projection method (EPM) (see \cite{korpelevich}) is another classical method solving a wider class of VIs than the GPM, i.e, VIs with monotone and Lipschitz continuous operators. In \cite{khanh2016}, Khanh proved that modified EPM with variable stepsizes is applicable for strongly pseudomonotone and Lipschitz continuous VIs. It is natural to ask whether modified EPM could solve VIs governed by pseudomonotone operators.
		
		\item It is also worth to consider the choice of $p$ to optimize the speed of convergence of iterative sequences produced by Algorithm \ref{new_algorithm} when $\lambda_k = \frac{1}{k^p}$ for all $k\ge 1$. Obviously, the optimized value of $p$ is not the same for all cases but depends on the constraint set $K$ and operator $F$. 
	\end{enumerate}
	
	{\small \noindent {\bf Acknowledgement.} We would like to thank Mr. Huynh Phuoc Truong for his comments and discussion in Example \ref{notLip}. We are also grateful to the anonymous referee and the associate editor for constructive comments and suggestions, which greatly improved the paper. Pham Duy Khanh was supported, in part, by the Fondecyt Postdoc Project 3180080, the Basal Program CMM--AFB 170001 from CONICYT--Chile, and  the National Foundation for Science and Technology Development (NAFOSTED) under grant number 101.01-2017.325.}

\end{document}